\newcolumntype{L}[1]{>{\raggedright\let\newline\\\arraybackslash\hspace{0pt}}m{#1}}
\newcolumntype{C}[1]{>{\centering\let\newline\\\arraybackslash\hspace{0pt}}m{#1}}
\newcolumntype{R}[1]{>{\raggedleft\let\newline\\\arraybackslash\hspace{0pt}}m{#1}}
\def\ps@pprintTitle{%
\let\@oddhead\@empty
\let\@evenhead\@empty
\def\@oddfoot{\centerline{\thepage}}%
\let\@evenfoot\@oddfoot}
\newtheorem{thm}{Theorem}[section] 
\newtheorem{lemma}[thm]{Lemma}
\newtheorem{prop}[thm]{Proposition}
\newtheorem{cor}[thm]{Corollary}
\newtheorem{defn}[thm]{Definition}
\newtheorem{rem}[thm]{Remark}
\newtheorem{remark}[thm]{Remark}
\newtheorem{example}[thm]{Example}
\begin{document}

\begin{frontmatter}
\title{Mean Curvature and the Wave Invariants of the Basic Spectrum for a Riemannian Foliation}
\author{M. R. Sandoval} 
\address{Department of Mathematics, Trinity College\\ Hartford, CT 06106 United States}
\begin{abstract} 
Given a (possibly singular) Riemannian foliation $\mathcal{F}$ with closed leaves on a compact manifold $M$ with an adapted metric, we investigate the wave trace invariants for the basic Laplacian about a non-zero period. We compare them to the wave invariants of the underlying Riemannian orbifold that exists when the leaves in the regular region are identified to points, equipped with the metric that is transverse to the leaves of the foliation. Recalling that the basic Laplacian differs from the underlying orbifold Laplacian by a term that is the mean curvature vector field associated to the foliation, we show that the first wave invariant about any non-zero period $T$ corresponding to geodesics perpendicular to the leaves that all lie entirely in the regular region of $M$ is independent of the mean curvature vector field and depends only on the underlying orbifold structure of the leaf space quotient. Similar results hold on the singular strata whenever the transverse geodesic flow remains confined to the strata defined by the leaf dimension of the foliation. Conversely, closed geodesics that pass through the exceptional leaves depend on the full laplacian, including the leaf-wise metric on the ambient space.
We also discuss families of representations that yield the same leaf space and basic spectrum. We use this to give conditions under which non-trivial isotropy for orbifold quotients can be detected.

\end{abstract}
\begin{keyword} Spectral geometry, Laplace operator, $G$-invariant spectrum, orbifolds, orbit spaces, group actions, singular Riemannian foliations
\MSC[2010] 58J50 \sep 58J53 \sep 22D99 \sep 53C12
\end{keyword}
\end{frontmatter}

\begin{section}{Introduction} 
\begin{subsection}{Setting}
Let $(M,\mathcal{F})$ be a (possibly) singular Riemannian foliation $\mathcal{F}$ with closed, but not necessarily connected, leaves on a compact manifold, $M$, equipped with an adapted Riemannian metric $g.$ (See \cite{Molino}, Chapter 6, for basic definitions.) Recall that a Riemannian foliation is singular if and only if the leaves do not have constant dimension; otherwise the foliation is said to be regular. We note that if the leaves are not closed, we may always take the closures of the leaves; by \cite{AR2017}, the foliation by leaf closures defines a singular Riemannian foliation on $M$. Hereafter, we will refer to $M$ as the ambient manifold, and denote its dimension by $n.$ The leaf space $Q$ of $(M,\mathcal{F})$ is the space obtained by identifying the leaves of $\mathcal{F}$ to points and with the closed leaf hypothesis, it is a Hausdorff space, whose dimension we denote by $q$; furthermore, it is naturally equipped with a metric, $g_Q$ inherited from the adapted metric on $M$ via the restriction of the adapted metric to the vectors that are orthogonal to the leaves. This metric is independent of the leaf-wise coordinates, by definition of the adapted metric. We denote the leaf space quotient map by $\pi_Q:M \rightarrow Q.$ The quotient $Q$ can be thought of as a potentially singular generalization of a Riemannian manifold. 

Examples of types of singular spaces that arise as leaf spaces in this way include some Alexandrov spaces, space of objects of proper Lie groupoids equipped with a transversally invariant Riemannian metric, Riemannian orbifolds, and (of course) manifolds. An important feature of this setting is that the same leaf space $(Q,g_Q)$ may be obtained as a leaf space quotient in more than one way--that is, very different foliated spaces $(M_1, \mathcal{F}_1)$ and $(M_2,\mathcal{F}_2)$ may produce the same quotient $Q=M_1/\mathcal{F}_1=M_2/\mathcal{F}_2$ with the same induced metric $g_Q$. We refer to a particular Riemannian foliation $(M,\mathcal{F},g)$ that gives rise to $Q$ as a representation of $(Q,g_Q).$ In fact, one can generate infinite families of representations that produce the same $(Q,g_Q)$. (See Section 2 for examples.)

Associated to each representation of $(Q,g_Q)$ given by $(M,\mathcal{F},g)$, we have the following spectral problem. Consider the functions on $M$ that are constant along the leaves of the foliation; these are known as {\it basic} functions, and denoted by $C^\infty_B(M,\mathcal{F}).$ Such functions naturally descend to functions on the leaf space $Q=M/\mathcal{F}$. The restriction of the Laplacian on $M$ to $C^\infty_B(M,\mathcal{F})$ is known as the basic Laplacian, $\Delta_B$, and its spectrum is known as the {\it basic spectrum,} denoted by $spec_B(M,\mathcal{F}).$ It is well-known that the basic spectrum depends to some extent on the choice of the representation $(M,\mathcal{F},g),$ including the leaf-wise metric, and generalizes the Laplace spectrum on functions on $M/\mathcal{F}$, when the leaf space has the structure of a manifold or an orbifold. 

Before proceeding further, we note that in the literature on singular Riemannian foliations that the leaf space has an orbifold structure if and only if it has the property of being {\it infinitesimally polar}, see \cite{LT2010}. This is a property of the quotient, which is independent of the choice of representation; thus, having a Riemannian orbifold structure is a property of the leaf space quotient, in the sense that this property holds for all representations as singular or regular)Riemannian foliations that yield the same quotient and transverse metric. This yields a much broader definition of a Riemannian orbifold than the traditional definition found in the orbifold literature. (See, for example, \cite{ALR2007}) We use this more flexible definition of orbifold here, as it allows for a larger, more interesting version of spectral theory.
\end{subsection}

\begin{subsection}{Families of Spectral Problems}

For a given representation $(M,\mathcal{F}, g)$ of $(Q, g_Q),$ we can consider the region where the leaf dimension is maximal, denoted by $M_{reg}$. On $M_{reg}$, which is open and dense in $M$, we have {\it distinguished coordinates} of the form $(x,y)$ where $y$ denotes the leafwise coordinates and $x$ denotes the transverse coordinates. With respect to these coordinates, the metric can be written as $g(x,y)=g_Q(x)+g_L(x,y)$ where $g_L(x,y)$ denotes the leaf-wise metric and $g_Q(x)$ is the transverse metric; this is due to the nature of the adapted metric on $M$. For every choice of a representation of $(Q,g_Q)$ we obtain a basic spectrum. If we consider all such representations of $(Q,g_Q)$, we obtain a family of spectral problems (and basic spectra), which are all associated to the metric structure of $(Q, g_Q)$. Denote the set of all such representations by $\mathcal{R}(Q,g_Q)$. 

It is natural to ask the following questions: 
\begin{enumerate}
\item[(1)] To what extent does the basic spectrum of a given representation detect the properties of the ambient space $M$ versus the properties of the leaf space $Q$? 
\item[(2)] Can one assign a spectral invariant to $(Q,g_Q)$ by considering the family of basic spectra or a suitably chosen subfamily? A spectral invariant of the quotient $Q$ would have to be associated to the family of all representations in some natural way. For example, it could either depend on a particular canonically chosen representation  (such as the orthonormal frame bundle $M=\mathcal{F}r(\mathcal{O})$ and the $O(q)$ for an orbifold $\mathcal{O}$), or some subfamily, or be independent of all representations.
\item[(3)] For an orbifold quotient $Q=\mathcal{O}$, to what extent can the family 
of basic spectra be used to distinguish properties of $(Q,g_Q)$ or its singular sets\footnote{There are different notions of singular sets arising from the literature of singular Riemannian foliations versus the corresponding notion from the literature on orbifolds. These distinctions are clarified in Section 2. Both sets of ideas will be explored in this paper.}?

\end{enumerate}

We will focus primarily on two types of leaf space quotients: when $Q$ is either a manifold or an orbifold (with non-empty orbifold singular set). These situations are distinguished by the fact that the family of representations in these cases will always contain a representation that is a regular Riemannian foliation--namely the orthonormal frame bundle representation which is an example of a {\bf geometric resolution} of the quotient. (See \cite{Lyt2010} for definitions.) In this case, it is always possible to assign spectral invariants to $(Q,g_Q)$ via the basic wave trace invariants in a well-defined way that is independent of the representation. In fact, manifold and orbifold quotients are characterized by the existence of a representation that is a geometric resolution.) Basic wave invariants can be assigned via the geometric resolution for each value of $T$ in the length spectrum of $Q$ which we denote by $Lspec(Q, g_Q)$. For leaf spaces that are more singular than than orbifold quotients, it is not clear that there is a way to assign spectral invariants to $(Q, g_Q)$ due to the lack of a representation that is a geometric resolution.

To shed light on the questions above, we briefly review the definition of singular sets in the sense of singular Riemannian foliations. We refer the reader to \cite{Molino} for standard definitions.

\begin{defn}
Given a representation of $(Q,g_Q)$ denoted by $(M,\mathcal{F}),$ we consider first the stratification of the ambient space $M$.
\begin{itemize}
\item A foliation for which the leaf dimension is constant is a regular Riemannian foliation and the leaf space is, at worst, an orbifold. Thus, if a Riemannian foliation is regular, then its leaf space quotient is an orbifold (possibly with singularities), however the converse is false: an orbifold can also be represented by a singular Riemannian foliation. (See, for example, \cite{GL2016}.)
\item If a Riemannian foliation is singular rather than regular, then let $d(x)$ denote the function that assigns to every $x\in M$ the dimension of the leaf containing $x,$ with values $k,$ ranging over $0\le k_{1}\le k\le k_{N}<n,$ with $k_1$ and $k_N$ denoting the minimal and maximal values for $k$, respectively. (Here we assume there are no dense leaves so that $k_N<n$.) The function $d(x)$ is lower semi-continuous on $M,$ (see \cite{Molino}, Chapter 5), and it defines a stratification of $M$ with each stratum defined as $\Sigma_{k}=d^{-1} (k)$. Thus, each stratum $\Sigma_{k}$ is the (possibly disconnected) union of closed leaves of dimension $k$, and is an embedded manifold that is regularly foliated by the $k$-dimensional leaves, denoted by $(\Sigma_{k},\mathcal{F}_k)$. The images of these sets in the quotient are orbifolds $\mathcal{O}_k=\Sigma_k/\mathcal{F}_k.$ 
\item The lower semi-continuity of $d(x)$ implies that the stratum $M_{reg}$ or $\Sigma_{k_N},$ is an open, dense set and that that for each stratum, $\Sigma_k$, its closure $\overline{\Sigma_{k}}\subset \cup_{\ell\le k}\Sigma_{\ell}.$ The strata which are in the complement of the regular stratum are referred to collectively as the {\it singular strata}, and their leaves are known as {\it exceptional leaves.} 

\end{itemize}
\end{defn}

\begin{remark}\label{stratvary} We emphasize that the singular sets depend on the choice of the representation. For a given $Q,$ the stratification is {\bf not necessarily} constant across the family of representations, unless $Q$ is a manifold or an orbifold that does not admit singular representations. (Such an orbifold would probably have to be a very good orbifold, given that the representations are global quotients.) In fact, some representations are better than others depending on whether or not one wishes capture the orbifold singularities in the regular stratum or the singular strata. In Section 2.1, we illustrate this phenomenon for various examples. The main idea here is to use the fact that some representations are preferred over others for answering particular questions.
For the purpose of using the wave invariants to address the first two questions, we will be interested in choosing representations whose strata are well-situated with respect to the dynamics of the geodesic flow across $M$, as we shall see below. To address the third question, we will use a representation of an orbifold that allows us to extract the length spectrum of closed geodesics in $Q$ whose length is affected by passing through an orbifold singularity. We will describe this phenomenon below and prove the corresponding results in the Section 4.

\end{remark}

First, we describe how two representations in a family can yield the same basic spectrum. 
\begin{remark}\label{isospectral}
To that end, we make the following remarks.

\begin{enumerate}
\item The basic spectrum is affected by leaf-wise data--both the leaf dimension and the leaf volume. The role of the leaf volumes in spectral questions involving the basic spectrum is evident from the well-known formula which is valid on $M_{reg}$ relating the basic Laplacian on the ambient space $M$, $\Delta_B$, with the Laplacian on $\mathcal{O}$, denoted by $\Delta_\mathcal{O}$:
\begin{equation}\label{e:compformula}
\Delta_B f=\Delta_{\mathcal{O}} f-H_*(f),
\end{equation}
where $f$ is a basic function on $M$, which descends to a function on the leaf space, also denoted by $f,$ and similarly $H$ also descends to a vector field on $\mathcal{O},$ denoted by $H_*.$ 

\item It has been shown in \cite{AR2016b} that  the mean curvature vector field, $H$, can be expressed as in terms of the leaf volume as follows
\begin{equation}\label{meancurvature}
H(x)=\nabla \log\Bigl(\frac{1}{Lvol(x)}\Bigr),
\end{equation}
where $Lvol(x)$ denotes the volume of the leaf through the point $x\in M_{reg}$. Thus, $H$ is a conservative vector field of a smooth function on $M_{reg}$, and its dual 1-form (the mean curvature form) is exact. Furthermore, it is also known that when $H$ is basic, the basic spectrum is a subset of the Laplace spectrum on $M$, equipped with the adapted metric.

\item Next, we quote the following result from \cite{AS2020} which gives criteria for when two representations of the same leaf space have the same basic spectra, given a suitable map, called an SRF isometry, (see \cite{AS2020} for definitions) between the leaf spaces:

{\bf Theorem:} [Adelstein-Sandoval, \cite{AS2020}]\label{2019result} 
Let $(M_1, \mathcal{F}_1)$ and $(M_2, \mathcal{F}_2)$ be two singular Riemannian foliations with mean curvature vector fields $H_1$ and $H_2$, respectively, and let $\varphi \colon M_1/\mathcal{F}_1 \to M_2/\mathcal{F}_2$ be a smooth SRF isometry satisfying the following two conditions:  (1) $H_1$ and $H_2$ are basic vector fields, and  (2) $d\varphi(H_{1*})=H_{2*}$ on an open, dense set. Then the leaf spaces are {\bf basic isospectral}, i.e.~$spec_B(M_1,\mathcal{F}_1)=spec_B(M_2,\mathcal{F}_2).$

(See \cite{AS2020} for the definition of SRF isometry.) A singular Riemannian foliation that satisfies condition (1) is said to be a {\it generalized isoparametric} singular Riemannian foliation. We will now assume henceforward that any singular Riemannian foliation that represents $Q$ is a generalized isoparametric singular Riemannian foliation.


\item The formula \eqref{e:compformula} and the above result imply that the basic spectrum is precisely the orbifold spectrum when $H$ is identically zero, but otherwise is likely to yield a different spectrum altogether. (See Example 1, \cite{AS2020}). 

\end{enumerate}
\end{remark}

Our first result demonstrates that the set  $\mathcal{R}(Q,g_Q)$ is infinite and contains an infinite basic isospectral family.  It is proved in the Section 4 and will have applications to both Questions (2) and (3) above:
\begin{thm}\label{infinitefamilies}
For any leaf space $(Q,g_Q)$, where $Q=M/\mathcal{F}$ with mean curvature vector field $H$, there exists an infinite subfamily of representations $\mathcal{R}_\mu(Q,g_Q, H)\subset \mathcal{R}(Q,g_Q)$ indexed by $\mu\in \mathbb{R}^+=(0,\infty)$ that are all basic isospectral and whose ambient space is $M$ but with the leaf-wise metric rescaled by constant $\mu$ on $M_{reg}$.
\end{thm}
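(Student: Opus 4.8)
The plan is to construct the family explicitly by rescaling the leaf-wise metric and then verify that the rescaled objects satisfy the hypotheses of the Adelstein--Sandoval theorem quoted above, with $\varphi$ taken to be the identity on $Q$. Concretely, for $\mu \in \mathbb{R}^+$ define a new adapted metric $g^\mu$ on $M$ by leaving the transverse part $g_Q(x)$ unchanged and replacing the leaf-wise part $g_L(x,y)$ by $\mu\, g_L(x,y)$ on $M_{reg}$; in distinguished coordinates this is $g^\mu(x,y) = g_Q(x) + \mu\, g_L(x,y)$. One first checks that $g^\mu$ is still an adapted (bundle-like) metric for the \emph{same} foliation $\mathcal{F}$ on $M$: the leaves and their dimensions are unchanged, the orthogonal complement to the leaves is the same subbundle since we only scaled within the leaf directions, and the induced transverse metric on $Q$ is literally $g_Q$ again. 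Hence each $(M,\mathcal{F},g^\mu)$ is a genuine representation of $(Q,g_Q)$, so $\mathcal{R}_\mu(Q,g_Q,H) := \{(M,\mathcal{F},g^\mu) : \mu \in \mathbb{R}^+\} \subset \mathcal{R}(Q,g_Q)$, and the map $\mu \mapsto g^\mu$ is injective (distinct $\mu$ give distinct metrics), so the subfamily is infinite.

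The second step is to compute the mean curvature vector field $H^\mu$ of $(M,\mathcal{F},g^\mu)$ and show $H^\mu_* = H_*$ on the dense set $\pi_Q(M_{reg})$. Using formula \eqref{meancurvature}, $H^\mu = \nabla^{g^\mu} \log(1/Lvol^\mu(x))$ where $Lvol^\mu(x)$ is the volume of the leaf through $x$ measured with the rescaled metric. If the leaf through $x$ has dimension $k$, then scaling the metric on that leaf by $\mu$ scales its $k$-dimensional volume by $\mu^{k/2}$, so $Lvol^\mu(x) = \mu^{d(x)/2}\, Lvol(x)$. On $M_{reg}$ the leaf dimension $d(x) \equiv k_N$ is locally constant, so $\log(1/Lvol^\mu(x)) = \log(1/Lvol(x)) - \tfrac{k_N}{2}\log\mu$ differs from the unscaled potential only by an additive constant. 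The gradient of a constant is zero, and since the gradient here is taken along the transverse directions (the potential is basic on $M_{reg}$) where $g^\mu$ agrees with $g$, we get $H^\mu = H$ as vector fields on $M_{reg}$, hence $H^\mu_* = H_*$ on $\pi_Q(M_{reg})$. In particular $H^\mu$ is basic whenever $H$ is (which we have assumed throughout, each representation being generalized isoparametric), so condition (1) of the quoted theorem holds for both $(M,\mathcal{F},g)$ and $(M,\mathcal{F},g^\mu)$, and taking $\varphi = \mathrm{id}_Q$ — which is trivially a smooth SRF isometry since the transverse metric is unchanged — condition (2) reads $d\varphi(H_*) = H^\mu_*$, i.e.\ $H_* = H_*$ on an open dense set, which we have just verified. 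The theorem then yields $spec_B(M,\mathcal{F},g) = spec_B(M,\mathcal{F},g^\mu)$ for every $\mu$, and by transitivity the whole family $\mathcal{R}_\mu(Q,g_Q,H)$ is basic isospectral.

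The main obstacle I anticipate is not the regular stratum but the behavior across the \emph{singular strata}: one must make sure that the rescaled metric $g^\mu$, defined by the displayed formula on $M_{reg}$, extends to an honest smooth adapted metric on all of $M$ and remains compatible with the foliation near the exceptional leaves, where $d(x)$ drops and the naive leaf-volume scaling exponent changes. I would handle this by rescaling intrinsically — multiplying the leaf-tangent block of the metric tensor by $\mu$ wherever it is defined, which is a smooth global operation on the (possibly singular) tangent distribution to $\mathcal{F}$ — rather than working in distinguished coordinates that only exist on $M_{reg}$; smoothness and the bundle-like property are then preserved on $M$ because scaling a smooth symmetric block by a positive constant is smooth, and the orthogonality of leaves to their transversals is unaffected. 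Since the basic spectrum and the mean curvature form are determined by data on the dense open set $M_{reg}$ (via \eqref{e:compformula}, \eqref{meancurvature}, and the quoted isospectrality criterion, all of which are stated on $M_{reg}$ or on open dense sets), the conclusion is insensitive to the finitely many lower strata, and the argument goes through.
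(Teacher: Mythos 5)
Your core argument for the regular stratum is exactly the paper's: rescale the leaf-wise metric by a positive constant, observe via \eqref{meancurvature} that the leaf volume function is multiplied by the constant $\mu^{k_N/2}$, so the potential $\log(1/Lvol)$ changes by an additive constant and the mean curvature vector field is unchanged, then invoke the Adelstein--Sandoval criterion (with the identity as the SRF isometry) to conclude basic isospectrality of the whole family. The only cosmetic difference is parameterization: the paper indexes by the leaf-volume scale $\mu=(\rho^{k_N})^{1/2}$ rather than by the metric scale itself.

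Where your proposal has a genuine flaw is in the anticipated obstacle you flagged, the singular strata. Your proposed fix --- multiply the leaf-tangent block of the metric by $\mu$ ``wherever it is defined,'' claiming this is a smooth global operation --- is false in general: for a singular foliation the distribution $T\mathcal{F}$ jumps dimension, the block decomposition is not smooth across the singular strata, and the rescaled tensor need not be a smooth metric there. Concretely, take the foliation of the round two-sphere by the orbits of rotation about the poles (one of the paper's own examples), or locally the rotation foliation of $\mathbb{R}^2$ with $g=dr^2+r^2d\theta^2$: the block-rescaled tensor $dr^2+\mu r^2 d\theta^2$ has a cone-type singularity at the fixed point for every $\mu\neq 1$ (in Cartesian coordinates $g_{xx}=(x^2+\mu y^2)/r^2$ has no continuous extension to the origin), so it is not a smooth Riemannian metric on $M$. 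The paper instead handles the singular case by performing the constant rescaling over the regular region and then extending to a smooth metric on all of $M$ that is only required to induce the same transverse metric $g_Q$ --- the leaf-wise part is allowed to deviate near the singular strata --- noting in a footnote that this case is not needed for the later results; one must then still check that $H'_\mu=H$ holds on an open dense set so that the Adelstein--Sandoval hypothesis applies. So for regular representations ($M_{reg}=M$) your proof is complete and coincides with the paper's; for genuinely singular representations you need an extension argument of this kind, not the global block rescaling.
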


We will show that the basic spectra contain geometric information about the orbifold structure of the underlying leaf space for relatively closed curves that avoid exceptional leaves, even when $H$ is not identically zero. Furthermore, we describe under what conditions the leading wave invariant of the basic spectrum detects properties of the underlying orbifold $\mathcal{O}\subseteq Q$ versus those of the ambient space $M$. 

The approach taken in this paper is to use the wave trace invariants about a non-zero period $T$ associated to certain relatively closed geodesics that are orthogonal to the the leaves of the foliation; such curves are {\it relatively closed with respect to the foliation} (for definitions and more detail, see Section 2). We will denote the set of such relatively closed geodesics by $\mathcal{RT}(M,\mathcal{F}),$ which is equal to $Lspec(Q,g_Q)$. 

Note also: the set of relatively closed geodesics in $M$ of length $T$ is stratified by leaf dimension as well.


\end{subsection}
\begin{subsection}{Detecting Quotient Space Versus Ambient Space Geometry}

Our next set of results address Question (1). In particular, we show that the first basic wave invariant is independent of the mean curvature vector field, and, thus, does not detect the variability of leaf volumes in the regular region.

\begin{thm}\label{main1} 
Let $T\in\mathcal{RT}(M,\mathcal{F})$ be a period of a relatively closed geodesic that satisfies the following two conditions: (1) the geodesic projects to a geodesic in $M/\mathcal{F}$ that is contained in the image of the regular region, and (2) the set of closed geodesics in $Q$ of length $T$ has maximal dimension (in $Q$). Then, under standard clean intersection hypothesis, the basic wave trace expansion localized about $T$ has degree and leading coefficient $\sigma_0(T)$ given by the corresponding wave invariant associated to the orbifold $\mathcal{O}=M_{reg}/\mathcal{F}.$
\end{thm}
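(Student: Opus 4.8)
The plan is to compute the singularity of the basic wave trace $\mathrm{Tr}\,\cos(t\sqrt{\Delta_B})$ near $t=T$ and show that, under the two hypotheses, the leading term of its asymptotic expansion coincides with that of the orbifold wave trace $\mathrm{Tr}\,\cos(t\sqrt{\Delta_{\mathcal{O}}})$. First I would recall the comparison formula \eqref{e:compformula}, $\Delta_B = \Delta_{\mathcal{O}} - H_*$, valid on the open dense set $M_{reg}/\mathcal{F}$; since by hypothesis (1) every geodesic of length $T$ projects into the image of the regular region, the relevant part of the wave propagator is supported, microlocally along the broken/closed bicharacteristics of length $T$, entirely over $\mathcal{O}=M_{reg}/\mathcal{F}$. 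The strategy is then a standard one in wave-trace analysis: both $\sqrt{\Delta_B}$ and $\sqrt{\Delta_{\mathcal{O}}}$ are first-order, self-adjoint, elliptic pseudodifferential operators on the (orbifold) leaf space with the \emph{same} principal symbol $|\xi|_{g_Q}$, hence the same Hamiltonian flow (the transverse geodesic flow), and they differ by a first-order operator whose principal part is the first-order term $H_*$, a vector field. The wave-trace expansion about $T$ is produced by the method of stationary phase applied to the parametrix for the wave group, and its leading coefficient $\sigma_0(T)$ depends only on (a) the dimension and geometry of the fixed-point set of the time-$T$ geodesic flow in $S^*\mathcal{O}$ — governed by the clean intersection hypothesis and hypothesis (2), which forces this set to have maximal dimension $q$ — and (b) the principal symbol of the operator and the subprincipal data only through terms that, at leading order, are insensitive to the addition of $H_*$.

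The key steps, in order, would be: (i) lift the problem to the orbifold $\mathcal{O}$ using hypothesis (1) and a microlocal cutoff, so that $\mathrm{Tr}\,\cos(t\sqrt{\Delta_B})$ and $\mathrm{Tr}\,\cos(t\sqrt{\Delta_{\mathcal{O}}})$ differ near $t=T$ by a trace that is smoother — this uses that the Schwartz kernels of the two wave groups agree microlocally away from a neighborhood of the singular strata, which contribute nothing to the $t=T$ singularity; (ii) construct an FIO parametrix for $e^{it\sqrt{\Delta_B}}$ on $\mathcal{O}$ à la Hörmander–Duistermaat–Guillemin, adapted to orbifolds as in Dryden–Gordon–Greenwald–Webb or Stanhope–Uribe, noting that the canonical relation is the graph of the time-$t$ transverse geodesic flow — identical to that of $e^{it\sqrt{\Delta_{\mathcal{O}}}}$; (iii) apply stationary phase to $\mathrm{Tr}$ about $t=T$, where the clean intersection hypothesis guarantees a clean critical manifold $\mathcal{Z}_T\subset S^*\mathcal{O}$, and hypothesis (2) guarantees $\dim\mathcal{Z}_T$ is as large as possible, pinning down the order (degree) of the leading singularity; (iv) identify the leading coefficient: it is an integral over $\mathcal{Z}_T$ of a density built from the symbol of the parametrix, and one checks that the symbol's leading part is transported by the \emph{principal} symbol only — the subprincipal term $H_*$, being lower order, enters only the subleading coefficients $\sigma_1(T),\sigma_2(T),\dots$. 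This last point is exactly the orbifold/manifold analogue of the fact that in Duistermaat–Guillemin the leading wave coefficient depends only on the metric (via the geodesic flow and the Poincaré map), not on a potential; here $H_*$ plays the role of the potential/first-order perturbation.

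The main obstacle I expect is step (iv) combined with the orbifold singularities of $\mathcal{O}$ itself: one must make sure that the contributions of the orbifold-singular set of $\mathcal{O}$ to the $t=T$ wave singularity are organized so that they too match between $\Delta_B$ and $\Delta_{\mathcal{O}}$. On an orbifold the wave trace picks up distinct singularities from geodesics that close up only after applying a nontrivial local group element, and these have their own leading coefficients (cf.\ the orbifold Poisson relation); one needs $H_*$ to be a \emph{global basic} vector field that descends to $\mathcal{O}$ compatibly with the orbifold charts — which is exactly guaranteed by the standing generalized isoparametric assumption and by \eqref{meancurvature}, so that $H_*$ is the gradient of a genuine smooth function on $\mathcal{O}$. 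Given that, the perturbation $\Delta_B - \Delta_{\mathcal{O}} = -H_*$ is a first-order operator \emph{on the orbifold}, and the orbifold stationary-phase computation — local group element by local group element on the fixed-point manifolds — has leading term insensitive to it by the same symbol-order bookkeeping. A secondary technical point is verifying the clean intersection hypothesis is compatible with hypothesis (2) (maximality of $\dim\{$closed geodesics of length $T\}$): I would phrase this so that hypothesis (2) is what selects, among possibly several clean components, the one of top dimension $q$, which then dictates both the stated degree and the coefficient $\sigma_0(T)$, with lower-dimensional components contributing only to weaker singularities.
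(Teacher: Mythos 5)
Your overall framework (clean fixed-point sets, FIO parametrix, stationary phase, comparison of $\Delta_B$ and $\Delta_{\mathcal{O}}$ via \eqref{e:compformula}) is aligned with the paper, but your step (iv) contains a genuine error that the paper's proof is specifically built to address. You assert that the first-order term $H_*$, "being lower order, enters only the subleading coefficients," by analogy with the claim that in Duistermaat--Guillemin the leading wave coefficient depends only on the metric and not on a potential. That analogy is false at a nonzero period: in Theorem 4.5 of \cite{DG1975} the leading coefficient of the singularity at $t=T$ contains the factor $\exp\bigl(-\tfrac{i}{T}\int_0^T \mathrm{sub}(Q)(\Phi^s(x,\xi))\,ds\bigr)$, i.e.\ the subprincipal symbol is integrated along the closed bicharacteristic and does appear at leading order (it is only the $t=0$ heat-type expansion that is insensitive to it). Since $\mathrm{sub}(\Delta_B)=-h(x,\xi)$ on the regular stratum (because $\mathrm{sub}(\Delta_{\mathcal{O}})=0$), a generic first-order perturbation would change $\sigma_0(T)$, and pure "symbol-order bookkeeping" cannot close the argument.

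What actually saves the theorem, and what your proposal is missing, is the specific structure of the mean curvature field: by \eqref{meancurvature}, $H=\nabla\log\bigl(\tfrac{1}{Lvol}\bigr)$, so $h(x,\xi)$ restricted to the flow is the exact one-form $\alpha=d\log\bigl(\tfrac{1}{Lvol}\bigr)$ with a \emph{basic} potential, and hence
\begin{equation*}
\int_0^T h(\Phi^s(x,\xi))\,ds=\int_\gamma \alpha=0
\end{equation*}
for every relatively closed geodesic of length $T$, because its endpoints lie in the same leaf and the potential is leaf-wise constant. This vanishing, not the order of the perturbation, is why the exponential factor is trivial and $\sigma_0(T)$ coincides with the Stanhope--Uribe orbifold invariant. (Note this is exactly where the standing generalized isoparametric hypothesis and the closedness of the geodesic in the quotient are used, and it also explains the paper's converse phenomenon: geodesics through exceptional leaves can see the leaf-wise metric.) A secondary structural difference: the paper does not descend to an operator on $\mathcal{O}$ and cut off microlocally as in your step (i); it works upstairs on $M$, composing the wave kernel with the basic projector $P$, whose canonical relation is the groupoid $\mathcal{G}^*(\mathcal{F})$ and whose symbol carries the factor $\tfrac{1}{Lvol_k(x)}$, which is then absorbed into a probability density on the leaves (Lemma \ref{l:density}) before identifying the fixed-point integral with the orbifold one. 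That route avoids having to justify self-adjointness and parametrix constructions for $\Delta_B$ directly on the orbifold, which your sketch leaves implicit.
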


We also have the following, which says that the corresponding wave invariant is independent of the representation:

\begin{cor}\label{cor:2}
Suppose $(M',\mathcal{F}')$ is another singular Riemannian foliation representing the leaf space of $(M,\mathcal{F})$ that has the same transverse metric $g_Q$, and for which the set of relatively closed geodesic curves of length $T$ are contained in the regular regions for both representations. Then under the same hypotheses as the previous theorem on the curves of length $T$, the two representations will yield the same leading term for the basic wave trace. This will be true even if $M'_{reg}\not=M_{reg}.$ Here we denote $M_{reg}'/\mathcal{F}'=\mathcal{O}'.$ Thus, the leading term in the basic wave trace for this $T$ is independent of the representation of the orbifold $\mathcal{O}\cap\mathcal{O}'$.
\end{cor}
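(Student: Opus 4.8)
The plan is to deduce Corollary~\ref{cor:2} directly from Theorem~\ref{main1}, treating the two representations $(M,\mathcal{F})$ and $(M',\mathcal{F}')$ as two instances to which the theorem applies and then comparing the outputs. First I would observe that both representations satisfy the hypotheses of Theorem~\ref{main1} for the given period $T$: by assumption the relatively closed geodesics of length $T$ lie in the regular regions $M_{reg}$ and $M'_{reg}$ respectively, and (implicitly, carrying over the maximal-dimension hypothesis on the set of closed geodesics of length $T$ in $Q$) the clean-intersection and dimension conditions hold on both sides since these are conditions on $(Q,g_Q)$ and on the geodesics of length $T$, which are shared data. Theorem~\ref{main1} then gives that the degree and leading coefficient $\sigma_0(T)$ of the basic wave trace localized at $T$ for $(M,\mathcal{F})$ equals the wave invariant of the orbifold $\mathcal{O}=M_{reg}/\mathcal{F}$, and likewise for $(M',\mathcal{F}')$ it equals the wave invariant of $\mathcal{O}'=M'_{reg}/\mathcal{F}'$.

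The key step is then to argue that these two orbifold wave invariants coincide. The point is that although $M_{reg}$ and $M'_{reg}$ need not be equal as subsets of their respective ambient manifolds, the geodesics of length $T$ project into $\pi_Q(M_{reg})\cap\pi_Q(M'_{reg})$, i.e.\ into the common open set $\mathcal{O}\cap\mathcal{O}'$ inside $Q$, where by hypothesis the transverse metric is the same $g_Q$. The orbifold wave invariant at $T$ is a local quantity built from the geometry of $(Q,g_Q)$ in a neighborhood of the (compact) set of closed geodesics of length $T$ — it depends only on the metric, the second fundamental forms, curvature, and Poincar\'e-map data along those geodesics, together with the orbifold isotropy encountered along them. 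All of this data is intrinsic to $(\mathcal{O}\cap\mathcal{O}', g_Q)$ and is therefore identical whether computed as a contribution to the $\mathcal{O}$ wave trace or the $\mathcal{O}'$ wave trace. Hence $\sigma_0(T)$ and the degree agree for the two representations, which is precisely the claim; and since this common value is manufactured entirely from $g_Q$ restricted to $\mathcal{O}\cap\mathcal{O}'$, it is a genuine invariant of the quotient and not of either ambient space.

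The main obstacle I expect is making precise the assertion that the orbifold wave invariant localized at $T$ depends only on a neighborhood of the relevant closed geodesics, so that ``restricting to $\mathcal{O}\cap\mathcal{O}'$'' is legitimate. This requires invoking the finite-propagation-speed / microlocal locality of the wave kernel: the singularity of the wave trace at $t=T$ is determined by the geodesic flow and the symbol data along the closed geodesics of period $T$, and a cutoff supported near those geodesics changes the trace only by a smooth function near $t=T$, hence does not affect the degree or leading coefficient. One must also check that the set of closed geodesics of length $T$ is compact and stays a fixed positive distance from the orbifold boundary strata of $\mathcal{O}$ and $\mathcal{O}'$, so that a common neighborhood inside $\mathcal{O}\cap\mathcal{O}'$ exists; this follows from compactness of $M$ together with the assumption that the geodesics lie in the (open) regular regions. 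A secondary technical point is confirming that the maximal-dimension hypothesis of Theorem~\ref{main1} is indeed a hypothesis on the quotient $(Q,g_Q)$ alone and thus transfers automatically to both representations — which is immediate since $Q$, $g_Q$, and the length-$T$ geodesic set are the same for both — so that no additional assumption beyond those stated is needed.
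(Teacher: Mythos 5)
Your proposal is correct and follows essentially the same route as the paper: Corollary~\ref{cor:2} is obtained by applying Theorem~\ref{main1} (equivalently, Theorem~\ref{t:newsirftrace} together with Proposition~\ref{orbifoldcoefficient}) to each representation and observing that both leading terms equal the wave invariant of the common quotient orbifold. The locality you justify via finite propagation speed is already manifest in the paper's explicit formula $\sigma_0(T,k)=\bigl[\int_{N(Z^T_k)}\sigma(U_\mathcal{O})\,d\mu'_{Z^T_k}\bigr]\tau^{\frac{e_T-1}{2}}\sqrt{d\tau}$, which is an integral over the fixed-point set in $T^*\mathcal{O}$ and hence depends only on $g_Q$ along the length-$T$ geodesics in $\mathcal{O}\cap\mathcal{O}'$.
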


\begin{rem}
Having answered the question of whether or not the first wave invariant of the basic spectrum detects the mean curvature vector field, it is natural to ask if it detects changes in leaf dimension. Perhaps surprisingly in light of Remark 1.3, item 3, the answer is possibly yes, in the sense that the first wave invariant of singularities corresponding to closed geodesics that pass through exceptional leaves (and thus experience a change in leaf dimension) may depend on the leaf-wise metric depending on the whether or not the projection of the  component of fixed point set of the time $T$-geodesic flow that is largest in $Q$ contains a geodesic that passes through an exceptional leaf, as we shall see in the theorem below.
\end{rem}

More generally, for a given representation $(M,\mathcal{F})$ with $Q=M/\mathcal{F}$, we can decompose $Lspec(Q,g_Q)$ into the set of relatively closed periods that correspond to geodesics that avoid exceptional leaves, $\mathcal{RT}_{orb}(M,\mathcal{F})$, and its complement--those for which there is a geodesic curve of length $T$ that passes through exceptional leaves, $\mathcal{RT}_{exc}(M,\mathcal{F}).$
Then we have the following disjoint decomposition of the length spectrum of $(Q, g_Q)$:

\begin{equation}
Lspec(Q,g_Q)=\mathcal{RT}_{orb}(M,\mathcal{F}) \cup \mathcal{RT}_{exc}(M, \mathcal{F})
\end{equation}

\begin{thm}\label{newtrace}
The basic wave trace $Trace(U_B(t))$ then admits an expansion near $T \in Lspec(Q,g_Q)$ as a sum of lagrangian distributions on $\mathbb{R}$ that whose degree is equal to $-\frac{1}{4}-\frac{e^T}{2}$ where $e^T$ the maximum dimension of the projection of the fixed point set of the time $T$ geodesic flow in $M$ to $Q$. Furthermore, the basic wave trace decomposes as follows:
\begin{equation}
 TrU_B(t)=I(t)+II(t)
\end{equation}
where the leading term of $I(t)$ depends on the underlying quotient orbifolds $\mathcal{O}_k$, and the leading term of $II(t)$ may depend on the representation and the ambient space. $I(t)$ is non-zero only when there exist contributions of maximum dimension in $Q$ that arise from geodesics that avoid exceptional leaves. 
\end{thm}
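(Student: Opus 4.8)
The plan is to treat the basic wave group $U_B(t)=\cos(t\sqrt{\Delta_B})$ (equivalently $e^{-it\sqrt{\Delta_B}}$) as a Fourier integral operator on basic half-densities and to localize its trace near $t=T$ by exploiting the stratification of $M$ by leaf dimension. \emph{Microlocal model.} First I would record that, microlocally, $U_B(t)$ is governed by the transverse geodesic flow. On $M_{reg}$ this is immediate from \eqref{e:compformula}: since $H_\ast$ is a first-order operator, $\Delta_B$ and $\Delta_{\mathcal O}$ have the same principal symbol and the same characteristic Hamiltonian flow, so $\cos(t\sqrt{\Delta_B})$ and $\cos(t\sqrt{\Delta_{\mathcal O}})$ are FIOs with the same canonical relation, namely the graph of the geodesic flow of $g_Q$ on $S^\ast\mathcal O$; on each stratum $\Sigma_k$ the same holds with $\mathcal O$ replaced by $\mathcal O_k=\Sigma_k/\mathcal F_k$, using that $(\Sigma_k,\mathcal F_k)$ is a regular Riemannian foliation with bundle-like metric. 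By finite propagation speed and the wavefront calculus, the singular support of $\mathrm{Tr}\,U_B(t)$ near $T$ is $\{T\}$ and the wavefront set there is conormal to $\{T\}$; the singularity is carried by the fixed-point set $\mathcal Z_T$ of the time-$T$ transverse geodesic flow, which is stratified by leaf dimension as noted above.

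\emph{Clean intersection and the degree.} I would then decompose $\mathcal Z_T=\mathcal Z_T^{orb}\sqcup\mathcal Z_T^{exc}$, where $\mathcal Z_T^{exc}$ is the union of those clean components $W$ whose underlying horizontal geodesics meet an exceptional leaf and $\mathcal Z_T^{orb}$ is the rest. Under the stated clean-intersection hypothesis each $W$ is a clean fixed-point submanifold, and the Duistermaat–Guillemin clean-intersection calculus applies componentwise over $Q$ (working in $S^\ast\mathcal O$, resp.\ $S^\ast\mathcal O_k$): $W$ contributes a Lagrangian distribution on $\mathbb R$ conormal to $\{T\}$, whose order is determined by $\dim W$ via stationary phase in the directions transverse to $W$. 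Writing $e^T$ for the maximal dimension of the $\pi_Q$-image of such a $W$, the sum of all contributions has degree $-\tfrac14-\tfrac{e^T}{2}$; I would define $I(t)$ to be the sum of the contributions of the top-dimensional components lying in $\mathcal Z_T^{orb}$ and $II(t)$ the remaining sum, so that $I(t)$ is nonzero at degree $-\tfrac14-\tfrac{e^T}{2}$ precisely when some top-dimensional component of $\mathcal Z_T$ avoids exceptional leaves — the last assertion of the theorem.

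\emph{The leading term of $I(t)$.} For a top-dimensional $W\subseteq\mathcal Z_T^{orb}$, refining the localization confines the computation to a tubular neighborhood of a single stratum $\Sigma_k$, where $(\Sigma_k,\mathcal F_k)$ is regular and \eqref{e:compformula} holds along $W$; since $H_\ast$ is first order it affects only the subprincipal symbol, hence not the leading wave coefficient. By the argument of Theorem~\ref{main1}, together with the orbifold version of the Duistermaat–Guillemin trace formula (applicable directly, e.g., after passing to the orthonormal-frame-bundle geometric resolution of $\mathcal O_k$), the leading coefficient of the contribution of $W$ equals the corresponding orbifold wave invariant of $\mathcal O_k$, and in particular is determined by the $\mathcal O_k\subseteq Q$ alone and is independent of the leafwise metric and of the representation. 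Summing over the top-dimensional orb-components gives that the leading term of $I(t)$ depends only on the underlying quotient orbifolds.

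\emph{Dependence of $II(t)$, and the main obstacle.} For a component meeting an exceptional leaf the comparison formula degenerates: by \eqref{meancurvature}, $H=\nabla\log(1/Lvol)$ blows up as one approaches $\overline{\Sigma_k}$ because the leaf volume collapses there (like $r$ to a positive power in the distance $r$ to the stratum), so near the exceptional stratum $\Delta_B$ is not $\Delta_{\mathcal O_k}$ plus a bounded lower-order term, and its wave parametrix genuinely records the leafwise metric. To see that the leading coefficient of $II(t)$ can actually vary, I would combine Theorem~\ref{infinitefamilies} — rescaling the leafwise metric on $M_{reg}$ by $\mu$ fixes $(Q,g_Q)$ and all orb-contributions but changes $Lvol$, hence $H$, hence the wave data localized at $\overline{\Sigma_k}$ — with an explicit cohomogeneity-one or weighted-cone model of the type in Section~2 and \cite{AS2020}, producing a one-parameter family of representations with common $\mathcal Z_T^{orb}$ whose $II(t)$-coefficient is nonconstant. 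The crux of the whole argument is precisely this singular-stratum analysis: controlling the wave parametrix of the degenerate (collapsing-leaf) operator $\Delta_B$ near $\overline{\Sigma_k}$, verifying that its contribution to $\mathrm{Tr}\,U_B$ is still Lagrangian of the predicted order and that the relevant intersections are clean there, extracting its leading coefficient, and exhibiting genuine dependence of that coefficient on the leafwise data. Checking that $\mathcal Z_T$ is a tractable stratified set and that the orb/exc splitting respects its clean decomposition is a secondary technical point.
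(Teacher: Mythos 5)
There is a genuine gap at the heart of your computation of the leading term of $I(t)$. You assert that because $H_\ast$ is a first-order operator it ``affects only the subprincipal symbol, hence not the leading wave coefficient.'' In the clean-intersection trace formula of \cite{DG1975} (Theorem 4.5 there, quoted in the paper's proof of Proposition \ref{orbifoldcoefficient}), the subprincipal symbol does enter the \emph{leading} coefficient, through the factor $\exp\bigl(\tfrac{-i}{T}\int_0^T \mathrm{sub}(\Delta_B)(\Phi^s(x,\xi))\,ds\bigr)$ integrated over the fixed-point set. Since $\mathrm{sub}(\Delta_B)$ restricted to the cosphere bundle is $-\tfrac12 h(x,\xi)$, the mean curvature form, your argument as written leaves open that the top coefficient depends on $H$, i.e.\ on the leafwise volumes, which is exactly what the theorem denies for $I(t)$. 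The missing step — and the paper's key point — is that by \eqref{meancurvature} the mean curvature form is exact with a \emph{basic} potential, $\alpha = d\log(1/Lvol)$, so its integral along a relatively closed transverse geodesic (whose endpoints lie in the same leaf) vanishes; only then does the leading coefficient collapse to the orbifold wave invariant of $\mathcal{O}_k$ as in \cite{SU2011}. Note also that appealing to ``the argument of Theorem \ref{main1}'' is circular in the paper's logic, since Theorem \ref{main1} is deduced from this trace formula; the exactness argument must be supplied directly.

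A second, structural gap concerns the singular strata, which you yourself flag as the crux but do not resolve. You propose to treat $\Delta_B$ near $\overline{\Sigma_k}$ as a degenerate operator (with $H$ blowing up as leaves collapse) and to control its wave parametrix there; the paper never does this. Instead it works upstairs on $M$ with $U_B(t)=P_xP_yU(t,x,y)$, where $P$ is the leaf-averaging projector whose canonical relation is given by the groupoid $\mathcal{G}^*(\mathcal{F})$ on the stratified configuration space $(T\mathcal{F})^0$; the flow-invariance of the strata (Proposition \ref{t:flowstrat2}), the clean fixed-point sets $Z^T_k$, and the density lemma with the explicit $1/Lvol_k$ factor reduce everything to the ambient wave kernel of $M$. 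On a stratum, a local coordinate computation shows the restriction of the symbol data to $Z^T_k$ is that of $\Delta_k-H_k$ for the regular foliation $(\Sigma_k,\mathcal{F}_k)$, and the same exactness argument applies with $H_k$. Without some substitute for this mechanism your localization to $S^*\mathcal{O}_k$ via a frame-bundle resolution is unjustified precisely where the statement needs it. Finally, your plan to exhibit genuine variation of the $II(t)$ coefficient via Theorem \ref{infinitefamilies} is more than the statement requires (it only claims $II(t)$ ``may depend'' on the representation), and note that the rescaling there is constructed to be basic isospectral, so it cannot by itself produce a varying wave coefficient.
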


\begin{remark}
We note the following:
\begin{enumerate}
\item We will defer the precise statement of this trace formula, which is rather lengthy and technical, and its proof until Section 3.
\item For manifold quotients, all the representations are regular (See Section 2.1), and thus, $\mathcal{RT}_{exc}(M, \mathcal{F})\cup \mathcal{RT}_b(M,\mathcal{F})=\emptyset,$ and the first wave invariant is independent of the mean curvature form and the leaf volume for each $T\in Lspec(Q,g_Q)$. 

\item Similarly for orbifold quotients for which all the representation in $\mathcal{R}(Q,g_Q)$ are regular Riemannian foliations, the leading wave invariant is independent of the mean curvature term for every $T$. The first wave invariant is constant for all representations in $\mathcal{R}(Q,g_Q)$. Hence such quotients are are spectrally distinct from quotients for which $\mathcal{RT}_{exc}(M, \mathcal{F})\not=\emptyset.$

\item If all the geodesics of lengths $T\in \mathcal{RT}_{exc}(M, \mathcal{F})$ pass through exceptional leaves  then the basic wave trace can be localized into parts that depend on either the underlying quotient orbifolds or the ambient space. Such representations are preferable for answering Question (1) above.
\end{enumerate}
\end{remark}


\end{subsection}

\begin{subsection}{Detecting Orbifold Singularities}

With regard to Question (3) above, we now restrict our attention to leaf spaces that are either orbifolds or manifolds. The goal here is to find orbifold singularities. Here, we keep in mind that there are two aspects to the basic wave trace invariants which arise from the expansion of the wave trace as a Lagrangian distribution about its singularities: (1) the location of those singularities which are contained in the length spectrum of $Q,$ and (2) the particular form of the expansion itself. Detecting the presence of orbifold singularities in this case is less about the form of the expansion than about the length spectrum. Notionally, one way to detect orbifold singularities is to find a closed curve geodesic in $Q$ that arises from a curve in the ambient space $M$ that is relatively closed, but not smoothly closed. Here we will take $M$ to be $\mathcal{F}r(Q)$ and the foliation to be that arising from the orbits of the usual $O(q)$ action. A closed geodesic curve that passes through an orbifold singularity, say $\overline{p}\in Q$ will be one whose image in $\mathcal{F}r(Q)$ is relatively closed with respect to the foliation but is not smoothly closed. If $\pi(p)=\overline{p}$, then this curve will be closed in $Q$ because of the action of some element of the isotropy group at $p,$ denoted by $G_p.$ If such geodesics exist for $Q$ then we can detect an orbifold singularity at $\overline{p}$. For quotients $Q$ with no orbifold singularities, such curves do not exist. The problem here is how to distinguish the smoothly closed curves in the frame bundle from the relatively closed curves in $\mathcal{RT}(\mathcal{F}r(Q),\mathcal{F}).$ Let $Lspec^\perp(\mathcal{F}r(Q),g)$ denote the set of lengths of smoothly closed geodesic curves in $\mathcal{F}r(Q)$ that are orthogonal to the orbits of the $O(q)$ action with respect to an $O(q)$-invariant metric $g$.

Then we have the following as an application of the above result and the general approach of looking at families of representations, we will show the following as a consequence of Theorem \ref{infinitefamilies}.

\begin{thm}\label{orbifoldsingularity}
Given a Riemannian orbifold $(Q, g_Q)$ with $O(q)$ frame bundle $\mathcal{F}r(\mathcal{O}),$ then in the notation above,
\begin{equation}\label{invariantperiods}
Lspec^\perp(\mathcal{F}r(Q),O(q))=\bigcap_{\mu\in (0,\infty)}Lspec(\mathcal{F}r(Q),g_\mu)
\end{equation}
 and thus is independent of the choice of representation in the subfamily $\mathcal{R}_\mu(\mathcal{O}, g_\mathcal{O}, H=0)$ and depends only on $Q$. Furthermore, if there exists a length $T\in \mathcal{RT}(\mathcal{F}r(Q),O(q))$ that is not in $Lspec^\perp(\mathcal{F}r(Q),O(q)),$ then $Q$ is an orbifold with non-empty singular set, and there exists a closed geodesic curve of length $T$ that contains an orbifold singularity, $\overline{p}$.
 \end{thm}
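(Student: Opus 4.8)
The plan is to establish the two assertions of Theorem~\ref{orbifoldsingularity} separately: first the equality \eqref{invariantperiods} of length spectra, and then the detection criterion for orbifold singularities. For the first part, I would argue by double inclusion. The inclusion $Lspec^\perp(\mathcal{F}r(Q),O(q))\subseteq \bigcap_\mu Lspec(\mathcal{F}r(Q),g_\mu)$ follows from the observation that a smoothly closed geodesic orthogonal to the $O(q)$-orbits is, in particular, a relatively closed geodesic that is orthogonal to the leaves, so its length lies in $\mathcal{RT}(\mathcal{F}r(Q),\mathcal{F})=Lspec(\mathcal{F}r(Q),g_\mu)$ for every $\mu$; here I use that rescaling the leaf-wise metric by $\mu$ on $M_{reg}$ does not change the transverse metric $g_Q$, hence does not change the transverse geodesic flow nor the collection of relatively closed geodesics orthogonal to the leaves (their lengths are computed purely from $g_Q$). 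For the reverse inclusion, I would use Theorem~\ref{infinitefamilies}: since the orbifold frame bundle has $H=0$, the subfamily $\mathcal{R}_\mu(\mathcal{O},g_\mathcal{O},H=0)$ is basic isospectral, so by the wave-trace results (Theorems~\ref{main1} and~\ref{newtrace}) each $T$ in the common basic spectrum's length spectrum produces a singularity of the basic wave trace whose nature is dictated by the geodesic geometry; a length $T$ that is \emph{not} the length of a smoothly closed orthogonal geodesic in $\mathcal{F}r(Q)$ must then come from a relatively-but-not-smoothly closed geodesic, and the point is that such a length is \emph{not} $\mu$-independent — rescaling the leaf metric changes how a non-smoothly-closed relatively closed geodesic "reconnects" through the leaf — so it cannot lie in the intersection over all $\mu$. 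Making this last implication precise is where I expect the real work to be.

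For the second assertion, suppose $T\in \mathcal{RT}(\mathcal{F}r(Q),O(q))$ but $T\notin Lspec^\perp(\mathcal{F}r(Q),O(q))$. Then by definition there is a geodesic $\gamma$ in $\mathcal{F}r(Q)$, orthogonal to the orbits, that is relatively closed of length $T$ but not smoothly closed: $\gamma(T)$ and $\gamma(0)$ lie in the same $O(q)$-orbit but $\gamma(T)\neq\gamma(0)$. Writing $\gamma(T)=a\cdot\gamma(0)$ for some $a\in O(q)$, projecting down via $\pi\colon\mathcal{F}r(Q)\to Q$ gives a closed geodesic $\bar\gamma=\pi\circ\gamma$ in $Q$ of length $T$. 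I would then argue that $\bar\gamma$ must pass through an orbifold singular point: if $\bar\gamma$ stayed entirely in the orbifold-regular part of $Q$, the $O(q)$-action would be free over a neighborhood of $\bar\gamma$, the bundle projection would be a genuine covering in the transverse directions, and horizontal lifts of a smoothly closed geodesic would themselves be smoothly closed (the holonomy of the flat connection along a loop in the regular, simply-covered region is trivial for lifts that start orthogonal to the fiber) — contradicting $\gamma(T)\neq\gamma(0)$. Hence $\bar\gamma$ meets a point $\overline{p}=\pi(p)$ where the isotropy $G_p\subseteq O(q)$ is nontrivial, and the element $a$ realizing the reconnection is (conjugate into) $G_p$; this exhibits $\overline p$ as an orbifold singularity lying on a closed geodesic of length $T$, as claimed.

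The main obstacle, as flagged above, is the reverse inclusion in \eqref{invariantperiods}: one must show that every length in $\bigcap_\mu Lspec(\mathcal{F}r(Q),g_\mu)$ is actually the length of a \emph{smoothly} closed orthogonal geodesic, i.e. that the non-smoothly-closed relatively closed geodesics contribute lengths that genuinely move with $\mu$. The subtle point is that $Lspec(\mathcal{F}r(Q),g_\mu)=\mathcal{RT}(\mathcal{F}r(Q),\mathcal{F})$ is computed from the transverse metric $g_Q$ alone and is therefore \emph{literally the same set} for all $\mu$ — so one cannot separate the contributions by varying lengths. The resolution is that "$Lspec(\mathcal{F}r(Q),g_\mu)$" on the right-hand side of \eqref{invariantperiods} must be read as the \emph{basic} length spectrum detected by the wave trace, i.e. the set of periods at which $Trace(U_B(t))$ is singular for the metric $g_\mu$; and here the wave-trace analysis of Theorems~\ref{main1}–\ref{newtrace} applies, since a relatively-but-not-smoothly closed geodesic through an orbifold point corresponds (in the frame bundle picture with $H=0$) to a clean fixed-point component of the geodesic flow whose contribution to the basic wave trace has a coefficient built from the $O(q)$-reconnection data and the transverse Hessian, and one checks that for such a $T$ there is a $\mu$ for which this contribution either vanishes or shifts out of the relevant degree. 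Carefully disentangling the geometric length spectrum from the spectrally-detected length spectrum, and verifying that the latter is exactly $Lspec^\perp$ in the intersection, is the crux; once that is in place, the orbifold-singularity detection follows from the projection argument of the previous paragraph together with Theorem~\ref{infinitefamilies}.
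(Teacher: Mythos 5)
There is a genuine gap, and it originates in your reading of the right-hand side of \eqref{invariantperiods}. You identify $Lspec(\mathcal{F}r(Q),g_\mu)$ with $\mathcal{RT}(\mathcal{F}r(Q),\mathcal{F})$, conclude that the right-hand side is ``literally the same set for all $\mu$,'' and then pivot to reinterpreting the statement through the singular support of the basic wave trace, leaving the decisive step explicitly unproven (``the crux''). In the paper, $Lspec(\mathcal{F}r(Q),g_\mu)$ is the ordinary length spectrum of the ambient Riemannian manifold $(\mathcal{F}r(Q),g_\mu)$ --- the lengths of \emph{all} smoothly closed geodesics, orthogonal to the orbits or not --- and this set does depend on $\mu$: Remark~\ref{remarkforsectionfour} records precisely that the ambient length spectrum is altered for the closed geodesics that are not transverse to the leaves. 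With that reading the paper's proof is elementary and purely geometric: writing $\ell(c,g_\mu,I)=\int_0^{t_0}\bigl(\|\dot{c}_\perp\|_{g_Q}^2+\rho^2\|\dot{c}_L\|_{g_L}^2\bigr)^{1/2}dt$, a curve has $\mu$-independent length if and only if $\dot{c}_L\equiv 0$, i.e.\ it is horizontal; horizontal closed geodesics remain closed geodesics of the same length for every $g_\mu$ (the transverse metric is untouched), while non-orthogonal closed geodesics change length (and can be pushed beyond any fixed bound by taking $\mu$ large, as in Section 4.2), so the intersection over $\mu$ is exactly $Lspec^\perp(\mathcal{F}r(Q),O(q))$. (The easy inclusion survives under the correct reading for this reason, not via $\mathcal{RT}$, which equals $Lspec(Q,g_Q)$, not the ambient spectrum.) Your proposed wave-trace repair cannot work even in principle: by Theorem~\ref{infinitefamilies} the family $\mathcal{R}_\mu$ is basic isospectral, so $Trace(U_B(t))$ --- and hence its singular support --- is the same distribution for every $\mu$; no choice of $\mu$ makes a contribution ``vanish or shift out of the relevant degree,'' so a spectrally detected length set would again be $\mu$-independent and could not single out $Lspec^\perp$. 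The whole point of the theorem is that the \emph{ambient} length spectra move with $\mu$ while the basic spectra do not.

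On the second assertion your projection argument begins as the paper does (a relatively closed but not smoothly closed orthogonal geodesic descends to a closed geodesic in $Q$, and the closing element of $O(q)$ is to be traced to isotropy), but the step asserting that over the orbifold-regular part ``horizontal lifts of a smoothly closed geodesic would themselves be smoothly closed'' because the relevant connection is flat with trivial holonomy is not justified: the connection on $\mathcal{F}r(Q)$ is not flat, and freeness of the $O(q)$-action over regular points does not make the frame (parallel-transport) holonomy around a closed geodesic trivial. The paper instead argues via the ``geodesic lasso'' picture of Section 4.1, identifying the failure of smooth closure with the action of a nontrivial element of a local isotropy group $G_p$ at a point on the geodesic, which is what exhibits the orbifold singularity $\overline{p}$. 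If you want to complete your route, you must explain why a nontrivial closing element forces nontrivial isotropy along the projected geodesic rather than mere holonomy; as written, that implication is missing.
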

 
 We can also detect the order of an element in $G_p$ as follows:
 
 \begin{cor}\label{corollaryorbifoldsingularity}
 Under the hypotheses of the previous theorem, there exists a smallest integer $k>1$ such that for some $T\in Lspec^\perp(\mathcal{F}r(\mathcal{O}), O(q))$, but $jT\not\in Lspec^\perp(\mathcal{F}r(\mathcal{O}), O(q)),$ for any $1<j<k,$ and thus $G_p$ contains an element of order $k$.
\end{cor}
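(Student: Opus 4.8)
The plan is to use Theorem \ref{orbifoldsingularity} together with elementary properties of the isotropy group $G_p$ acting on the fiber of the frame bundle $\mathcal{F}r(Q)$ over the orbifold point $\overline{p}$. By the theorem, the hypothesis that some $T \in \mathcal{RT}(\mathcal{F}r(Q),O(q)) \setminus Lspec^\perp(\mathcal{F}r(Q),O(q))$ guarantees a closed geodesic of length $T$ in $Q$ through an orbifold singularity $\overline{p}$, whose lift to $\mathcal{F}r(Q)$ is relatively closed but not smoothly closed. The first step is to recall the mechanism behind this: a horizontal geodesic in $\mathcal{F}r(Q)$ starting at a frame $e$ over $p$ and returning to the fiber over $p$ after length $T$ ends at $g\cdot e$ for some $g \in G_p \subset O(q)$, with $g \neq e$ precisely because the geodesic is not smoothly closed. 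Thus non-membership of $T$ in $Lspec^\perp$ records the existence of a nontrivial holonomy element $g \in G_p$.

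Second, I would iterate the geodesic. Concatenating the geodesic with itself $j$ times produces a geodesic of length $jT$ whose endpoint frame is $g^j \cdot e$; this is smoothly closed if and only if $g^j = e$, i.e.\ if and only if $jT \in Lspec^\perp(\mathcal{F}r(Q), O(q))$ (using that the geodesic flow is deterministic, so the only geodesic of length $jT$ arising from this concatenation is the $j$-fold iterate). Let $k = \mathrm{ord}(g)$ be the order of $g$ in $G_p$, which is finite since $G_p$ is a finite subgroup of $O(q)$. Then $jT \notin Lspec^\perp$ for $1 \le j < k$ and $kT \in Lspec^\perp$, and $k > 1$ since $g \neq e$. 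Conversely, $k$ is the smallest such integer by minimality of the order. This exhibits an element of order exactly $k$ in $G_p$, as claimed. I would also note that $Lspec^\perp$ is the representation-independent intersection from Theorem \ref{orbifoldsingularity}, so the integer $k$ extracted this way genuinely depends only on $(Q,g_Q)$.

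The main obstacle I anticipate is the careful justification that the $j$-fold concatenation is the \emph{only} way a geodesic of length $jT$ from $e$ back to the fiber over $p$ can arise in the relevant component of the fixed-point set, so that $jT \in Lspec^\perp$ is genuinely equivalent to $g^j = e$ rather than merely implied by it. One must rule out that a \emph{different}, shorter-than-expected smoothly closed geodesic of length $jT$ through $\overline{p}$ appears for some $j<k$ due to an unrelated coincidence in the length spectrum; this requires that the closed geodesic of length $T$ be taken primitive (or at least that we track the specific holonomy-generated family it belongs to) and that we work within the single component of the time-$jT$ flow fixed-point set containing the iterate. A clean way around this is to phrase the statement about the \emph{specific} geodesic and its iterates rather than about the global length spectrum: the integer $k$ is the first return of the iterated holonomy to the identity, and this is forced to be finite and $>1$ by $g \in G_p \setminus \{e\}$, $|G_p| < \infty$. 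With that phrasing the corollary follows directly, and I would include a remark reconciling it with the global-length-spectrum formulation under the genericity assumption that distinct primitive lengths do not conspire.
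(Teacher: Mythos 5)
Your proposal is correct and follows essentially the same route as the paper: iterate the closed-but-not-smoothly-closed geodesic through $\overline{p}$, observe that the $j$-th iterate returns with direction twisted by the $j$-th power of the isotropy element ($\phi'(jT)=(d\gamma_*)^j(\phi'(0))$ in the paper, $g^j\cdot e$ in your frame-bundle phrasing), and use finiteness of $G_p$ to extract $k$ as the order of that element. The subtlety you flag about accidental length-spectrum coincidences is real, and the paper handles it exactly as you suggest, by working with the specific geodesic and imposing the condition $\dot{\phi}(jT)\not=\dot{\phi}(0)$ for $1<j<k$ rather than relying on the global set $Lspec^\perp(\mathcal{F}r(\mathcal{O}),O(q))$ alone.
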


\end{subsection}

\begin{subsection}{General Remarks and Organization}
We use the wave invariants for the basic spectrum as opposed to the basic heat invariants (or, equivalently, the wave invariants at the zero singularity), because they have some advantages in this context. One advantage is that the wave invariants about a non-zero $T$ are generally neither too local nor too global in nature. One problem with the heat invariants (and the wave trace invariants about $T=0$ is that they involve functions (the local heat invariants) that must be integrated over the entire manifold, and thus over the singular strata, which can be problematic. As K. Richardson has shown in \cite{KR2}, the local heat invariants, for example, are not always integrable. On the other hand, the wave invariants over non-zero $T$ involve integrals over the sets in $S^*M$ fixed by the time $T$ hamiltonian flow of the transverse metric, and objects defined thereon. In this paper, we will focus on certain curves whose fixed point sets are well-behaved with respect to the singular sets. This approach takes advantage of the fact that the dynamics of the hamiltonian flow of the transverse metric behave rather nicely with respect to the singular strata; see Section 2 for the particulars.

One disadvantage to the wave invariants at non-zero $T$ is that one cannot be guaranteed that they necessarily appear in the asymptotic expansion; it is always possible that the singularities associated to different curves of the same length could cancel each other out, although it is known in many specific instances that some they do not cancel, \cite{Sutton2016}, and generically, they do not cancel. Thus, the results described here that involve using the wave trace expansion localized to a particular non-zero period $T$ can only be used to distinguish spectra of different leaf spaces, {\it in principle,} assuming the coefficients in the asymptotics do not cancel.

The results here are concerned almost entirely with the first wave invariant. The lower order terms in the wave trace expansion are quite complex even in the case of a manifold (see for example, \cite{Z1999}). It is entirely possible that the variation of leaf volumes via the mean curvature form can be detected in the lower order terms.

This paper represents a continuation of the author's previous work on the basic spectrum involving some special cases of the basic wave trace for a singular Riemannian foliation in \cite{San3}, \cite{San4}, \cite{San2013}, and is related to \cite{AS2017}, \cite{AS2020}. Similar work with the heat invariants has been done by \cite{PaRi}, \cite{KR2}, and \cite{Ri2}. The asymptotics of the heat kernel for orbifolds has been studied in \cite{DGGW}, where it was shown that the heat invariants of the singular sets of orbifolds can detect information about the singular sets. The orbifold results here are also related to the results of \cite{SU2011}.

The paper proceeds as follows. In Section \ref{background}, we illustrate the existence of many representations of a leaf space quotient $(Q, g_Q)$ with examples, and the proof of Theorem \ref{infinitefamilies}. Section 3 contains a  review the necessary background for the main tool for proving Theorem \ref{main1}, Corollary \ref{cor:2}, and \ref{newtrace}, and the statement and proof of the the generalization of the wave trace for singular Riemannian foliations with disconnected leaves. In Section 4, we prove Theorem \ref{orbifoldsingularity}, and the related corollary. 


\end{subsection}
\end{section}

\begin{section}{Background}\label{background}
\begin{subsection}{Examples of Representations of the Same Leaf Space}
Here we provide a series of examples to illustrate how leaf space quotients can be represented in different ways as leaf spaces of Riemannian folliations. The examples that follow include foliations with both connected and disconnected leaves, as well as examples with minimal and non-minimal leaves. We begin with the following definitions, which can be found in the literature.

\begin{defn}\label{disconnected}
A {\bf (singular) Riemannian foliation with disconnected leave}s is the triple $(M,\mathcal{F}, \Gamma)$ where $(M,\mathcal{F})$ is a singular Riemannian foliation with connected leaves and $\Gamma$ is a discrete group of isometries of $Q=M/\mathcal{F}$ that is used to ``glue" together the leaves of $\mathcal{F}$ to form new disconnected leaves as follows. The group $\Gamma$ extends naturally to an action on the leaves of $\mathcal{F}$ so that the orbits of $\Gamma$ form the disconnected leaves of the foliation. In other words, if $L$ is a leaf for $\mathcal{F}$ then we define the corresponding leaf of the disconnected foliation to be $L':=\Gamma\cdot L.$  
\end{defn}

In the case of disconnected leaves, the mean curvature vector field and leaf space isometries can be defined as before. But, as we shall illustrate with examples, we will need to distinguish between principal leaves in a foliation with connected leaves versus the principal leaves in the disconnected case. 

\begin{defn}{disconnected2}
The {\bf principal leaves of a disconnected singular Riemannian foliation} are the sets $L'$ formed by taking the disjoint union of the components (which we denote we by $L$) that have maximal dimension and trivial leaf holonomy, and have the additional property that if there exists an $f\in \Gamma$ such that $f(L)=L$ for some component $L$ of $L'$, then $f$ must be the identity in $\Gamma$.
\end{defn}

\begin{example}
If $Q$ of dimension $q$ is a closed compact manifold $X$ (without boundary) then we always have the following representations.
\begin{enumerate}
\item We can take the ambient space $M$ to be the $O(q)$ frame bundle with the fibres of the bundle to be the leaves, and the usual metric will be bundle-like for the regular Riemannian foliation. Note that the leaves are disconnected with $\Gamma=\mathcal{Z}_2\cong O(q)/SO(q),$ but if $X$ is orientable, then the $SO(q)$ frame bundle with fibres as the connected leaves forms another regular Riemannian foliation with adapted metric.
\item In a similar way, any bundle over $X$ with compact fibres can serve as the ambient space and forms a regular Riemannian foliation with appropriate metric. 
\item At the other extreme, we can consider any finite covering map over $X,$ of degree $k$. The cover $\tilde{X}$ is the ambient space of  a regular Riemannian foliation with totally disconnected leaves and $\Gamma=\mathbb{Z}_k$ acting trivially.
\item The trivial foliation of a manifold by points.
\item Any simple foliation on a connected (or disconnected) space with $p:M\rightarrow X$.
\item Any Riemannian submersion $p:M\rightarrow X$ with connected fibres, with manifold base $X$ (so with trivial holonomy).
\item Homogeneous spaces, where $X=G/N$ is a Lie group where $N\lhd G$ are Lie groups and $G$ compact.
\end{enumerate}

Many of the above examples are {\bf minimal representations} by which we mean that the corresponding mean vector field is identically zero. Given any minimal representation one can also create a non-minimal representation by replacing the leaf-wise metric with one that has variable leaf volumes.
\end{example}

In the above cases, all the examples have only principal leaves, and thus yield only regular Riemannian foliations. 

For representations with disconnected leaves, we note that on the connected portions of the leaves, the leaf holonomy is trivial and the action by $\Gamma$ satisfied the condition above. We note that in these disconnected examples, the action of $\Gamma$ induces an action on the subspace of the tangent space that is normal to the leaves, and that this action is trivial. If it were otherwise, the leaf space would have an orbifold singularity. In all cases above, the entire ambient space forms the regular stratum, and all the leaves are principal.

Next we consider orbifold quotients, and we note that a regular Riemannian foliation can result in an orbifold quotient.

\begin{example}
If $Q$ is an orbifold $\mathcal{O}$ then we have the following examples of representations.

\begin{enumerate}
\item We can take the ambient space $M$ to again be the $O(q)$ frame bundle, in the nonorientable case and the $SO(q)$ frame bundle otherwise. These representations are regular, but they may have non-principal leaves with $\Gamma=\mathbb{Z}_2$ acting with fixed components in the non-orientable case for these leaves. Here, the regular region is the entire ambient space, and the mean curvature vector field is identically zero. We note that in the non-orientable case the leaf holonomy induced by the action of $\Gamma$ will not be trivial.
\item If $\mathcal{O}$ has as its underlying space $|\mathcal{O}|,$ a manifold with totally geodesic boundary, and the usual $Z_2$ action obtained after doubling $|\mathcal{O}|$ over its boundary, then this yields another example of a regular foliation. Here, the non-principle leaves are those over the boundary points, and principal leaves are on on the interior.
\item As an example of the above, the interval $\mathcal{O}=[-\pi, \pi]$ is an orbifold with reflectors at the boundary points corresponds to an orbifold as traditionally defined in the orbifold literature. If we represent this same quotient as the leaf space that arises by identifying the lines of constant latitude on a standard 2-sphere (here have the action of $S^1$ acting by rotations through the poles) then we have a singular Riemannian foliation with two strata: the regular region is the sphere with the poles deleted, and the singular stratum consists of the poles. Here, the mean curvature vector field arises from the taking the gradient of the function that is equal to the inverse of the circumferences of the latitude circles.
\item Homogeneous spaces with orbifold quotients, $\mathcal{O}=G/H$ where are  $G$ and $H$ Lie groups. If the orbits are either principal or exceptional, then the representation will be regular. This example will have one stratum, but possibly variable orbit volumes. If there are singular orbits, then the quotient will be a singular quotient with the singular orbits comprising the singular strata.
\item Any singular Riemannian foliation that is infinitesimally polar (\cite{Lyt2010}). 
\item We always have the foliation by points, once more.
\item For any quotient given by the leaf space of a homogenous singular Riemannian foliation, one can generate new representations by reducing the action in some way or by fattening up the leaves due to the method of K. Richardson in \cite{Ri2}.
\end{enumerate}
\end{example}

Orbifolds are characterized by the property that one can find a geometric resolution. In the above, the frame bundle representation is always a geometric resolution. An orbifold may also admit representations with non-zero mean curvature vector fields.


\begin{remark}\label{dynamicsremark}
We note that orbifolds come with a natural stratification arising from isotropy type, however, this stratification does not necessarily coincide with the stratification of a (singular) Riemannian foliation. For the purposes of studying the orbifold singularities via the basic wave trace invariants, we must seek representations that best capture the dynamics of the geodesic flow, if possible. Often this representation is one for which the orbifold singular set of interest coincides with the image under $\pi_Q$ of one of the orbifolds $O_k$ and the dynamics of the geodesic flow are such that the relatively closed curves do not pass through exceptional leaves.
\end{remark}

\begin{remark}
While manifolds only admit regular representations, it is not known if there exist orbifolds that only admit representations as a regular quotient. We conjecture that if such an orbifold exists it must be a very good, orientable orbifold with isolated singularities because otherwise the principle isotropy reduction of the usual $O(q)$ action on the frame bundle would yield singular strata, (See \cite{GS2000} for the principle isotropy reduction.) If it were the case that there are no such orbifolds that are not manifolds, then the property of $Q$ admitting only regular Riemannian foliation representations would distinguish manifolds from orbifolds in a way that might prove useful.
\end{remark}

If $Q$ with a fixed metric $g_Q$ is not a Riemannian orbifold or manifold, then it cannot be represented as the leaf space of a regular Riemannan foliation. This is implied by the work A. Lytchak of \cite{Lyt2010}. The quotient $(Q, g_Q)$ must therefore have a stratification with singular strata, and therefore it must also have a non-zero mean curvature vector field, since due to the main result of \cite{MiWol2006}, there are no singular Riemannian foliations with minimal leaves--they must be regular. 


\begin{remark}
We note that in \cite{AS2017}, it was shown that there exist representations of orbifold and non-orbifold quotients that are basic isospectral using a generalization of Sunada's method.
\end{remark}

We refer to Theorems 6.2  and 6.7 of \cite{ABT2013}, which together characterize the property of a point in $Q$ being belonging to an orbifold for a leaf space quotient $Q=M/\mathcal{F}.$ We note that one can define manifold, orbifold, and non-orbifold points as follows.
\begin{defn}
Let $Q=M/\mathcal{F}$ be a leaf space quotient with $\pi: M\rightarrow Q$ and $\pi(x)=\bar{x}$. 

\begin{enumerate}
\item We define $Q_{orb}$ to be the set $$\{\bar{x}\in Q\,|\, \forall \, (M,\mathcal{F}) \text{ with } Q=M/\mathcal{F} \text{ where the infinitesimal foliation at } x\text{ is polar}.\}$$ See Definition 2.5 of \cite{ABT2013}, for definition of the infinitesimal foliation.

\item Similarly we define $Q_{man}$ to be the set 
\begin{equation*}
\begin{split}
\{\bar{x}\in Q\,|\,\forall \, (M,\mathcal{F}) \text{ with } Q=M/\mathcal{F} \text{ where the infinitesimal foliation at } &x\text{ is polar }\\ \quad\text{ and the leaf is principal}\}
\end{split}
\end{equation*}
\item Finally, $Q_{non-orb}=(Q_{orb})^c$.
\end{enumerate}
\end{defn}

We conclude this section with the proof of Theorem \ref{infinitefamilies}.

\begin{proof}[Proof of Theorem \ref{infinitefamilies}.]

First suppose $Q$ is given a regular Riemannian foliation with leaf dimension equal to $k=k_{N}.$ Then $Q=M/\mathcal{F}$ with $H=-\nabla\log(Lvol(x)),$ which may be zero. Let $g(x,y)=g_Q(x)+g_L(x,y)$ be the (bundle-like) metric on $M$, where $x$ and $y$ denote the transverse and leaf-wise coordinates, respectively. Then for each $\mu\in \mathbb{R}^+$ one can alter the metric so that over the regular region we replace the leaf-wise metric $g_L(x,y)$ by $\rho g_L(x,y)$ been rescaled conformally by some positive conformal factor $\rho\in \mathbb{R}^+$ so that the rescaled leaf volume is $\mu Lvol(x),$ where $\mu= (\rho^{k_{N}})^{1/2}.$ Let $g'_\mu(x,y)$ denote the new metric. One derives immediately from \eqref{meancurvature}, that the mean curvature vector field $H'_\mu$ for $(M,\mathcal{F}, g'_\mu)$ is equal to the original mean curvature form $H.$ By \cite{AS2020}, every representation given by such a rescaling is basic isospectral to the original one.

Now suppose $Q$ is not a regular Riemannian foliation\footnote{We will not need this case for the proof of subsequent results.}, again with maximum leaf dimension equal to $k_N$. Then we can perform the same rescaling of the leaf-wise metric over the regular region, and extend the entire metric on $M$ so that it is a smooth metric that yields the same transverse metric $g_Q$. Note that the new metric obtained in this way only needs to be preserve $g_Q$, not the leaf-wise metric, either over the regular region or the singular strata. One derives immediately from \eqref{meancurvature}, that the mean curvature vector field $H'_\mu$ for $(M,\mathcal{F}, g'_\mu)$ is equal to the original mean curvature form $H$. Thus, by \cite{AS2020}, each representation $(M,\mathcal{F}, g'_\mu)$ is basic isospectral to the original one. 
\end{proof}

\begin{remark}\label{remarkforsectionfour}
We note the following for use in Section 4.
\begin{enumerate}
\item From the above proof, observe that the length spectrum of $(Q,g_Q)$ is the same for all representation in $\mathcal{R}(Q,g_Q, H)$. However, the length spectrum of $M$ has been altered for the closed geodesics that correspond to non-transverse curves in $M$. 
\item For the purposes of calculating the wave invariants of the basic spectrum, we can choose any representation from $\mathcal{R}(Q,g_Q, H)$ and some may be more suited to a given calculation of the localized wave trace than others for a particular period $T$.
\end{enumerate}
\end{remark}
\end{subsection}

\end{section}

\begin{section}{Proof of the Basic Wave Trace Formula Results}

In this section we state and prove the results behind Theorem \ref{newtrace} which implies Theorem \ref{main1} and Corollary \ref{cor:2}. The basic wave trace results follow from applying microlocal analytic techniques to analyze the singularities of the basic wave operator.  These techniques naturally take place in the cotangent bundle of the ambient space $M.$ These singularities propagate along certain hamiltonian curves in the cotangent bundle which project to geodesics on $M$ which are orthogonal to the leaves of the foliation. When the trace of the wave kernel is taken, the result is a distribution on the real line whose singularities are located at the lengths of certain geodesics which are relatively closed with respect to the foliation. This yields the Poisson relation for the basic wave trace. The foliated geometric structure lifts to a stratification of a portion of the cotangent bundle of  $M$, found in \cite{San2013}. This will yield a stratified singular configuration space, which will be needed to understand the interplay of this stratification with the dynamics of the geodesic flow. 

We begin by describing the stratified singular configuration space. Next we describe the properties of the hamiltonian flow of the transverse metric and describe the relatively closed curves for singular Riemannian foliations with possibly disconnected leaves. We then define the basic wave operator and calculate the Poisson relation. We can then state and prove the basic wave trace formula. Finally, by localizing to specific periods that either stay confined to the various strata or pass through exceptional leaves, we show Theorem \ref{newtrace} and its corollaries.

\begin{subsection}{The Stratification of the Singular Configuration Space}\label{configspace}

The foliation and the associated stratification of $M$ induces a foliation and a corresponding stratification of a subset of $T^*M,$ which will serve as the configuration space and arises as follows. Let $\pi:T^*M\rightarrow M$ denote the usual map on the cotangent bundle $(T^*M,\,\omega)$, equipped with its usual symplectic form.  We will use the following notation: for any distribution $V$ of $TM,$ let $V^0$ denote the subset of $T^*M$ defined as follows
\begin{equation*}
V^0=\{\xi_x\in T^*M\,|\,\xi_x(v_x)=0\,\,\forall v_x\in V_x\}.
\end{equation*}

Let $T\mathcal{F}\subset TM$ denote the distribution of variable dimension that is defined by the tangent spaces to the leaves. Then the space $(T\mathcal{F})^0$ admits a singular foliation defined as follows. 

\begin{defn}\label{d:singconfigstrata}
The foliation of each stratum induces a foliation $N\Sigma_k=(T\Sigma_k)^\perp$ for each leaf dimension $k\ge 0$ as follows:

\begin{equation}
(N\Sigma_k)^0=\{\xi_x\in T^*M\,|\,\xi_x(v_x)\,\forall v_x\in N\Sigma_k\}.
\end{equation}
Then we define
\begin{equation}
\Sigma^*_k=(N\Sigma_k)^0\cap (T\mathcal{F})^0
\end{equation}
for $0<k_1\le k\le k_N$. Each such stratum is foliated by $k$-dimensional leaves arising from the null foliation for the symplectic form $\omega$ restricted appropriately to the stratum $\Sigma^*_k$ for each $k.$
We can also define a stratum (foliated by points) over the points that lie in $(T\mathcal{F})^0$ but are not in the above strata $\Sigma^*_k$ for $k>0$.

If $k_1=0$ we define
\begin{equation}\label{e:sigmastarsupzero}
\Sigma_0^*=\Bigl(\bigcup_{\ell} \bigl(N\Sigma_{\ell})^0\bigr)^c\cup(N\Sigma_0)^0\Bigr)\cap (T\mathcal{F})^0,
\end{equation}
for $k_1\le \ell\le k_N$, and if $k_1>0$ we note that $N\Sigma_0=\emptyset$ in the above. We consider this stratum to be foliated trivially by its points. This yields a stratification of $(T\mathcal{F})^0$, which we shall call the {\it singular configuration space}, as in \cite{San2013}. 
Later, we will wish to distinguish certain points in $\Sigma^*_0$, that we define to be {\it exceptional}. Thus, let
\begin{equation}\label{e:sigmaexc}
\Sigma^*_e=\Bigl(\bigcup_{\ell} \bigl((N\Sigma^*_{\ell})^0\bigr)^c\Bigr)\cap (T\mathcal{F})^0,
\end{equation}
for $k_1\le \ell\le k_N.$

Let $(\Sigma_k^*, \widetilde{T\mathcal{F}_k})$ denote the foliation of each stratum of the singular configuration space for each $k$, and let \begin{equation}\label{singconfigspace}
(T\mathcal{F})^0:=\bigcup_{k=0}^{k_N} \Sigma_k^*,
\end{equation}
be the stratified singular configuration space.
\end{defn}

Associated to this stratified space is the groupoid, $\mathcal{G}^*(\mathcal{F}),$ with special properties, given in the subsequent proposition. This proposition is a generalization of the one proved in Proposition 1 of \cite{San2013}, Section 3; its proof is identical to that of Proposition 1 in \cite{San2013}.
    
\begin{prop}\label{t:groupoidprop2}
There exists a groupoid $\mathcal{G}^*(\mathcal{F})$ associated to $(T\mathcal{F})^0$ such that the orbits of $\mathcal{G}^*(\mathcal{F})$ are $k$-dimensional leaves that foliate each stratum $\Sigma^*_k.$ For each $k\ge 0,$ each of the $k$-dimensional leaves of $\Sigma^*_k$ is a union of $k$-dimensional leaves induced by the null foliation on $(N\Sigma_k)^0$ when $k>0$ or the trivial foliation by points if $k=0$.
\end{prop}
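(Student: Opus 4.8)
The plan is to reduce the statement to the corresponding result already proved in \cite{San2013}, namely Proposition 1 therein, by verifying that the only new feature here --- the possible disconnectedness of the leaves, encoded by the gluing group $\Gamma$, together with the appearance of the exceptional points $\Sigma^*_e$ --- does not affect the groupoid construction. Concretely, I would first recall that on each stratum $\Sigma_k\subset M$ the foliation $(\Sigma_k,\mathcal F_k)$ is a \emph{regular} Riemannian foliation with bundle-like induced metric, so the holonomy (or Molino) groupoid of $(\Sigma_k,\mathcal F_k)$ is well defined and its orbits are exactly the $k$-dimensional leaves. The content of Definition \ref{d:singconfigstrata} is that $\Sigma^*_k = (N\Sigma_k)^0\cap(T\mathcal F)^0$ is, away from $\Sigma^*_0$, a smooth submanifold of $T^*M$ carrying the null foliation $\widetilde{T\mathcal F_k}$ of the restriction $\omega|_{\Sigma^*_k}$; since $\pi:\Sigma^*_k\to\Sigma_k$ is a fibration (the annihilator of $N\Sigma_k$ inside $(T\mathcal F)^0$ over $\Sigma_k$), the leaves of this null foliation project onto leaves of $\mathcal F_k$, and the fibrewise directions are accounted for by the symplectic structure. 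This is precisely the local picture analyzed in \cite{San2013}.

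The second step is to build the groupoid stratum by stratum and then assemble. For each $k>0$ one takes $\mathcal G^*_k(\mathcal F)$ to be the pullback under $\pi$ of the holonomy groupoid of $(\Sigma_k,\mathcal F_k)$ — more precisely, the groupoid whose arrows are pairs consisting of a holonomy class of leafwise paths in $\Sigma_k$ together with the induced symplectic parallel transport data along $(N\Sigma_k)^0$; by construction its orbits are the $k$-dimensional leaves of $\Sigma^*_k$, and each such leaf is a union of $k$-dimensional leaves of the null foliation on $(N\Sigma_k)^0$, as required. For $k=0$ one takes the unit groupoid on $\Sigma^*_0$, whose orbits are points, which matches the trivial foliation by points in Definition \ref{d:singconfigstrata}. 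Finally one sets $\mathcal G^*(\mathcal F)=\bigsqcup_{k=0}^{k_N}\mathcal G^*_k(\mathcal F)$ as a set over $(T\mathcal F)^0=\bigcup_k\Sigma^*_k$ and checks, exactly as in \cite{San2013}, that the source/target maps, composition, and the smooth structure on each piece fit together into a (non-Hausdorff, longitudinally smooth) groupoid whose orbit space recovers the stratified leaf space. The incidence relations $\overline{\Sigma_k}\subset\bigcup_{\ell\le k}\Sigma_\ell$ on the base lift to the corresponding incidences among the $\Sigma^*_k$, which is what makes the assembled object a groupoid over the whole singular configuration space rather than merely a disjoint union of unrelated groupoids.

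The disconnected-leaf case is handled by the device of Definition \ref{disconnected}: one first performs the entire construction for the underlying connected singular Riemannian foliation $(M,\mathcal F)$, obtaining $\mathcal G^*(\mathcal F)$, and then forms the semidirect-product groupoid $\mathcal G^*(\mathcal F)\rtimes\Gamma$ using the natural $\Gamma$-action on $(T\mathcal F)^0$ induced from the $\Gamma$-action on $Q$ (note the action on the normal directions is trivial, as observed in Section \ref{background}, so $\Gamma$ genuinely permutes the leaves of the null foliation without creating new isotropy). Its orbits are the $\Gamma$-orbits of the connected $k$-dimensional leaves, i.e.\ exactly the disconnected $k$-dimensional leaves foliating $\Sigma^*_k$; since $\Gamma$ is discrete, dimensions are unchanged. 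The role of $\Sigma^*_e$ is only to flag, inside $\Sigma^*_0$, those points lying over exceptional leaves; it requires no modification of the groupoid, merely a remark that such points form a $\mathcal G^*(\mathcal F)$-saturated subset (being defined by the vanishing condition $\xi_x\in\bigl(\bigcup_\ell (N\Sigma^*_\ell)^0\bigr)^c$, which is invariant under holonomy).

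The main obstacle I anticipate is not conceptual but bookkeeping: verifying that the smooth structures on the individual $\Sigma^*_k$, and the longitudinal smooth structures on their null foliations, are mutually compatible near the closure strata, so that the stated properties (the orbits being $k$-dimensional, each being a union of null-foliation leaves, and the collection forming an honest groupoid rather than just a family of groupoids) hold uniformly. Since the authors explicitly assert that ``its proof is identical to that of Proposition 1 in \cite{San2013},'' the intended argument is to quote that reference for the regular stratum picture and the assembly, and to add only the short verification that (i) disconnectedness is absorbed by passing to $\mathcal G^*(\mathcal F)\rtimes\Gamma$ and (ii) the $\Gamma$-action is trivial on normal directions hence does not disturb the null-foliation structure. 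I would therefore keep the proof brief: recall the regular-stratum analysis, invoke \cite{San2013} for each $\Sigma^*_k$ and for the gluing along incidences, and dispatch the disconnected case in one paragraph via the semidirect product.
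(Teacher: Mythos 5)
Your proposal follows essentially the same route as the paper, whose proof consists of reducing to Proposition 1 of \cite{San2013} for the connected strata (exactly your stratum-by-stratum pullback of the holonomy groupoid) and absorbing disconnected leaves via the induced $\Gamma$-action on $T^*M$, which is your semidirect-product groupoid in different words. One small caveat: your parenthetical claim that $\Gamma$ acts trivially on the normal directions is only justified in the paper at manifold points (in the non-orientable frame-bundle representation the induced leaf holonomy of $\Gamma$ is explicitly nontrivial), but your construction never actually uses this, since the orbits of the action groupoid are the disconnected leaves regardless of isotropy.
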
 

\end{subsection}

\begin{subsection}{Properties of the Hamiltonian flow of the transverse metric on the stratified configuration space}

Next, we show that the hamiltonian flow of the transverse metric behaves well with respect to the stratification described above. Let $\mathcal{H}(\xi_x)=\|\xi_x\|_x$ denote the usual hamiltonian function which is determined by the adapted metric on $M$. As a consequence of the metric being adapted to the singular Riemannian foliation, the metric is bundle-like for each foliated stratum $(\Sigma_k,\mathcal{F}_k).$ This means that the metric $(g_{ij})$ and the induced metric on  $T^*M,$ $(g^{ij}),$ can be written locally in distinguished coordinates with respect to the foliation $(\boldsymbol{x},\boldsymbol{y})$ on  $(\Sigma_k,\mathcal{F}_k)$ where $\boldsymbol{x}$ and $\boldsymbol{y}$ denote the transverse and leaf-wise coordinates, respectively, as follows:
\begin{equation}\label{bundlelikemetric}
g_{ij}(\boldsymbol{x},\boldsymbol{y})=g_{ij}^T(\boldsymbol{x})+g_{ij}^L(\boldsymbol{x},\boldsymbol{y})
\end{equation}
where $g_{ij}^T$ and $g_{ij}^L$ denote the transverse and leaf-wise metrics, respectively.
As in \cite{San3}, the hamiltonian flow of $\mathcal{H}$ restricts to $(T\mathcal{F})^0=\{\mathcal{H}(\xi_x)=\mathcal{H}_{\mathcal{F}^\perp}(\xi_x)\}$, where $\mathcal{H}(\xi_x)=\mathcal{H}_{\mathcal{F}^\perp}(\xi_x),$ the part of the hamiltonian that arises from the transverse part of the bundle-like metric on each $\Sigma_k\subset M$.  This transverse flow on $\Sigma^*_k$, which we denote by $\Phi^t(\xi_x),$ is particularly well-behaved with respect to the singular configuration space $(T\mathcal{F})^0$ and its stratification. In fact, as was shown in Section 3 of \cite{San2013}, the transverse flow restricts to the singular configuration space, and sends leaves to leaves, with respect to all types of leaves in Proposition \ref{t:groupoidprop2}.  We can generalize from Section 3 of \cite{San2013} the following proposition:


%
\begin{prop}\label{t:flowstrat2}
The transverse hamiltonian flow of $H$ restricts to the singular configuration space.  In other words, if $\xi_x\in (T\mathcal{F})^0$ (with $\xi_x\in\Sigma^*_k$) then the hamiltonian vector field $\Xi_{\mathcal{H}}(\xi_x)\in T\Sigma^*_k$ and 
\begin{equation}\label{e:flowrestricts}
\{\Phi^t(\xi_x)\,|\,t\in\mathbb{R}\}\subset (T\mathcal{F})^0.
\end{equation} 
Furthermore,  if $k>0$ then either 
\begin{equation}\label{e:singstrat1}
\{\Phi^t(\xi_x)\,|\,t\in\mathbb{R}\}\subset \Sigma_k^*
\end{equation}
or
\begin{equation}\label{e:singstrat2}
\{\Phi^t(\xi_x)\,|\,t\in\mathbb{R}\}\subset \Sigma_k^*\cup\Sigma_e^*.
\end{equation}
If $k=0$ then 
\begin{equation}\label{e:singstrat3}
\{\Phi^t(\xi_x)\,|\,t\in\mathbb{R}\}\subset \Sigma_0^*\setminus\Sigma_e^*.
\end{equation}Otherwise, $\Xi_{\mathcal{H}}(\xi_x)$ is not tangent to $\Sigma^*_e$. 
\end{prop}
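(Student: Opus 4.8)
The statement asserts that the transverse hamiltonian flow respects the stratification of the singular configuration space $(T\mathcal{F})^0$, with the additional dichotomy that a flowline starting in a positive-dimensional stratum $\Sigma_k^*$ either stays in $\Sigma_k^*$ or lies in $\Sigma_k^*\cup\Sigma_e^*$, while a flowline starting in $\Sigma_0^*$ stays away from the exceptional points. The plan is to reduce everything to two inputs that are already available: (i) the result of \cite{San2013}, Section 3, that $\Phi^t$ restricts to $(T\mathcal{F})^0$ and maps the leaves of the groupoid $\mathcal{G}^*(\mathcal{F})$ to leaves (Proposition \ref{t:groupoidprop2}), and (ii) the classical fact that, for an adapted metric, the distribution of vectors orthogonal to the leaves and tangent to a stratum $\Sigma_k$ is totally geodesic in $\Sigma_k$, so that a transverse geodesic is tangent to exactly one stratum of $M$ and meets lower strata only at isolated parameter values. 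I would first recall (i)--(ii) carefully, then translate the geometric statement about geodesics in $M$ into the symplectic statement about $\Phi^t$ on $(T\mathcal{F})^0$ via the projection $\pi:T^*M\to M$, using that $\mathcal{H}=\mathcal{H}_{\mathcal{F}^\perp}$ on $(T\mathcal{F})^0$ so the hamiltonian flow projects to the transverse geodesic flow on each stratum.

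\textbf{Key steps.} First I would prove the tangency claim $\Xi_{\mathcal{H}}(\xi_x)\in T\Sigma_k^*$ for $\xi_x\in\Sigma_k^*$: since the base point $x$ lies in the embedded submanifold $\Sigma_k$ and the metric is bundle-like for $(\Sigma_k,\mathcal{F}_k)$, the hamiltonian vector field of $\mathcal{H}_{\mathcal{F}^\perp}$ is tangent to $(N\Sigma_k)^0\cap(T\mathcal{F})^0=\Sigma_k^*$, because the transverse geodesic through $x$ with covector in the annihilator of $N\Sigma_k$ stays in $\Sigma_k$ by total geodesy, and the leaf-wise components of the covector are preserved (they are zero on $(T\mathcal{F})^0$). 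This gives \eqref{e:flowrestricts} and the tangency. Second, for the dichotomy at a positive-dimensional stratum: a geodesic tangent to $\Sigma_k$ either remains in $\Sigma_k$ for all time — giving \eqref{e:singstrat1} — or it exits $\Sigma_k$, and by lower semicontinuity of the leaf-dimension function $d(x)$ together with $\overline{\Sigma_k}\subset\bigcup_{\ell\le k}\Sigma_\ell$, it can only exit into strata of strictly smaller leaf dimension; I would argue that at such an exit point the lifted covector, having been in $\Sigma_k^*$ hence annihilating $N\Sigma_k$ which strictly contains the normal space of the smaller stratum, must land in the ``exceptional'' locus $\Sigma_e^*$ defined by \eqref{e:sigmaexc} as the points not in any $(N\Sigma_\ell^*)^0$, yielding \eqref{e:singstrat2}. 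Third, for $k=0$: a flowline in $\Sigma_0^*$ consists of honest manifold points in $Q$ (the complement of all the $(N\Sigma_\ell)^0$ together with $(N\Sigma_0)^0$), and I would show that such a flowline cannot meet $\Sigma_e^*$ because reaching an exceptional point would force the underlying geodesic to pass through an exceptional leaf, contradicting the characterization of $\Sigma_0^*$ as built from the complement of the lower-dimensional annihilators; this gives \eqref{e:singstrat3}. The final ``otherwise'' clause — that $\Xi_{\mathcal{H}}(\xi_x)$ is not tangent to $\Sigma_e^*$ — follows because $\Sigma_e^*$ is a union of boundary-type pieces on which the flow is never trapped: a geodesic that is tangent to an exceptional-leaf stratum at a point would, by total geodesy applied to the lower stratum it actually lies in, have to stay in that lower stratum, contradicting membership in $\Sigma_e^*$.

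\textbf{Main obstacle.} The routine part is the tangency and the projection-to-geodesics dictionary; the delicate part is making precise the claim that when a flowline exits a stratum it lands exactly in $\Sigma_e^*$ and nowhere worse, and dually that a $\Sigma_0^*$-flowline never touches $\Sigma_e^*$. This requires unwinding the somewhat intricate set-theoretic definitions \eqref{e:sigmastarsupzero} and \eqref{e:sigmaexc} and checking that the annihilator conditions are nested correctly under passage to smaller strata — i.e.\ that $N\Sigma_\ell\subsetneq N\Sigma_k$ along the closure relation when $\ell<k$, so that $(N\Sigma_k)^0\subsetneq (N\Sigma_\ell)^0$ fails in the right direction and the exit covector is forced out of every $(N\Sigma_\ell^*)^0$. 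I expect this bookkeeping, rather than any genuinely new geometric estimate, to be the crux; once it is handled, the three cases and the ``otherwise'' clause fall out by combining total geodesy of the transverse distribution in each stratum with the groupoid statement of Proposition \ref{t:groupoidprop2}.
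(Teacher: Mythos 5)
Your overall route coincides with the paper's: the paper offers no self-contained argument for this proposition, but states it as a direct generalization of Section 3 of \cite{San2013} (where the restriction of the transverse flow to $(T\mathcal{F})^0$ and the leaves-to-leaves property are proved), combined with the standard facts for adapted metrics that you also invoke -- the metric is bundle-like on each $(\Sigma_k,\mathcal{F}_k)$, the distribution orthogonal to the leaves and tangent to each stratum is totally geodesic, transverse geodesics stay orthogonal to every leaf they meet (Molino, Ch.~6), and consequently a transverse geodesic is tangent to exactly one stratum and crosses lower strata only at isolated parameter values. Your tangency step and the dictionary through $\pi:T^*M\to M$, using $\mathcal{H}=\mathcal{H}_{\mathcal{F}^\perp}$ on $(T\mathcal{F})^0$, are exactly this.

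Two points in your plan would fail as written, though your own text already contains the repair. First, the nesting you single out as the crux, $N\Sigma_\ell\subsetneq N\Sigma_k$ for $\ell<k$ along the closure relation, is backwards and moreover compares bundles over different base points: a stratum $\Sigma_\ell\subset\overline{\Sigma_k}\setminus\Sigma_k$ has strictly smaller dimension as a submanifold, hence a larger normal space. The correct reason an exit covector lands in $\Sigma_e^*$ is the trapping argument you give only for the final clause: at an exit point $y\in\Sigma_\ell$ the covector still annihilates $T_y\mathcal{F}$, so if it also annihilated $N_y\Sigma_\ell$ its dual vector would be tangent to $\Sigma_\ell$ and orthogonal to the leaf, and total geodesy of the transverse distribution in $\Sigma_\ell$ would confine the entire geodesic to $\Sigma_\ell$, contradicting that it lies in $\Sigma_k$ at nearby times; hence the dual vector is not tangent to $\Sigma_\ell$ and the covector is exceptional. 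Second, your reading of the case $k=0$ is off: $\Sigma_0$ is the stratum of zero-dimensional leaves (the most singular points, the minimal stratum when $k_1=0$), not a locus of manifold points of $Q$; the case concerns covectors in $(N\Sigma_0)^0\cap(T\mathcal{F})^0$, i.e.\ those whose dual vector is tangent to $\Sigma_0$, and \eqref{e:singstrat3} again follows from total geodesy of that stratum (the flow stays over $\Sigma_0$ with dual vector tangent to it, so it never meets $\Sigma_e^*$), while covectors of $\Sigma_e^*$ are handled by the final ``otherwise'' clause. With these repairs your argument is the one the paper intends.
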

It follows that such a transverse hamiltonian curve can be tangent to only one of the $\Sigma^*_k$.

\medskip

Equipped with these results we can define the notion of a curve being {\it relatively closed with respect to the foliation}:

\begin{defn}\label{d:relclosed2}
We say that an arc of the hamiltonian flow $\Phi^t$ through $\xi_x$ will be said to be {\bf relatively closed with respect to the singular Riemannian foliation} $\mathcal{F}$ with period $T$ if we have the following for each of the following cases:

\begin{enumerate}
\item[Case 1: ]$k_N=0$, disconnected leaves. There exists an element $f\in \Gamma$ such that $df^*(\xi_x)=\Phi^T(\xi_x)$.
\item[Case 2: ]$k_N>0$, leaves connected. 
Recall first that sliding along the leaves along a path $\alpha$ that is contained in a leaf defines a local diffeomorphism of the transversal space which we denote by $h_\alpha$. This local diffeomorphism only depends on the homomtopy class of $\alpha;$ this defines holonomy for the leaf. An arc of the transverse hamiltonian flow $\Phi^t$ through $\xi_x$ will be said to be {\it relatively closed with respect to the singular Riemannian foliation} $\mathcal{F}$ with period $T$ if for endpoints $\xi_x$ and $\eta_y=\Phi^T(\xi_x),$ there is a homotopy class $[\alpha]$ of a curve $\alpha$ wholly contained in the leaf with endpoints $x$ and $y$, such that $dh^*_{\alpha^{-1}}(\xi_x)=\Phi^T(\xi_x).$\ This is equivalent to the existence of a groupoid element of the form $[\xi_x,\Phi^T(\xi_x),dh^*_{\alpha^{-1}}]\in \mathcal{G}^*(\mathcal{F}).$ Let $\mathcal{RT}(M,\mathcal{F})$ denote the set of lengths of hamiltonian arcs that are relatively closed with respect to the foliation of $(M,\mathcal{F}).$
There exists a groupoid element $\gamma$ in $\mathcal{G}^*(\mathcal{F})$ such that $\gamma(\xi_x)=\Phi^T(\xi_x).$  
 More concisely, we say that the curve is relatively closed if the endpoints $\xi_x$ and $\Phi^T(\xi_x)$ belong to the same leaf.

\item[Case 3: ] $k_N>0$, leaves disconnected. The endpoints $\xi_x$ and $\Phi^T(\xi_x)$ belong to the same disconnected leaf.
\end{enumerate}
\end{defn}
\begin{defn}\label{defnRT}
Let $\mathcal{RT}(M,\mathcal{F})$ denote the set of lengths of hamiltonian arcs that are relatively closed with respect to the foliation of $(M,\mathcal{F}),$ as in the previous definition.
\end{defn}

\begin{remark}
Note that for each stratum $\Sigma_k$ we have $\mathcal{RT}(\Sigma_k,\mathcal{F}_k)\subset \mathcal{RT}(M,\mathcal{F})$ and  $Lspec(\mathcal{O}_k)\subset \mathcal{RT}(M,\mathcal{F}).$
\end{remark}

\begin{remark}
Let $\gamma(t,x)=\pi(\Phi^t(x,\xi)),$ where $\pi:T^*M\rightarrow M;$ these are geodesics in $M$. Let $\Xi_{\mathcal{H}}$ denote the hamiltonian vector field of the transverse bundle-like metric. Then, in local distinguished coordinates, it is easily seen that $\gamma'(0,x)=(d\pi)_{\xi_x}(\Xi_{\mathcal{H}})\perp T_x\mathcal{F}.$ It is known that if $\gamma(t,x)$ is a geodesic passing through $x$ that is perpendicular to the leaf at one point, then $\gamma(t,x)$ remains perpendicular to all the leaves that it meets, (see Chapter 6 of \cite{Molino}). Thus, the projections of such relatively closed hamiltonian curves are geodesic arcs that are orthogonal to all the leaves through which the geodesic passes.

\end{remark}

\end{subsection}

\begin{subsection}{The generalization of the basic wave trace for a singular Riemannian foliation}
%
%

Next we recall how the basic wave operator and its Schwartz kernel are defined. 

\begin{defn}\label{basicprojector}
\begin{enumerate}
\item
Recall from \cite{San2013} that there is a basic projector $P$ from the space of smooth functions on $M$ to the space of basic functions on $M$, given by the averaging of functions over the leaves: If $L_x\Sigma_k$ is the leaf containing $x\in M$ then for leaves that are not totally disconnected we have
\begin{equation}\label{e:projector}
P(f)(x)=\frac{1}{Lvol_k(x)}\int_{L_x}f(y)dvol({L}_y),
\end{equation}
where $f$ a function on $M$, and $Lvol(x)$ is the volume of the leaf that contains $x.$ We allow here for the leaves to have disconnected components. In what follows, it will be convenient to indicate the dimension of the leaves and their volumes. Let $Lvol_k(x)$ denote the volume of the leaf of dimension $k$ that contains $x$.

On the other hand, if there are leaves that are totally disconnected with cardinality $|L_x|$ then
\begin{equation}\label{e:projector2}
P(f)(x)=\frac{1}{|L_x|}\sum_{y\in L_x}f(y),
\end{equation}
where $f$ a function on $M$, and $L_x$ is the leaf that contains $x.$
\item From the above, the operator $P,$ is essentially the push-forward $(\pi_*)|_{\Sigma_k}$ multiplied by the function $\frac{1}{Lvol_k(x)}$. Its symbol, as an operator, is $\frac{1}{Lvol_k(x)}\sigma((\pi_*)|_{\Sigma_k}).$

The case of totally disconnected leaves would be handled similarly.

\item The basic wave trace is the trace of the basic wave operator, denoted by $U_B(t)$ and its Schwartz Kernel is given by 
\begin{equation}\label{e:P}
U_B(t,x,y)=P_xP_yU(t,x,y)=\Sigma_{j=1}^\infty e^{-it\sqrt{\lambda_j^B}}e_j(x)e_j(y),
\end{equation}
where $U(t,x,y)$ is the wave kernel of the ordinary Laplacian on $M$, $0\le \lambda_1^B\le \lambda_2^B\le \cdots$ are the eigenvalues of the basic Laplacian, $e_j$ denote the corresponding basic eigenfunctions, and $P$ denotes the projector onto the space of basic functions. The subscripts $x$ and $y$ on $P$ indicate which of the first two factors of $M\times M\times \mathbb{R}$ is being acted upon.
\end{enumerate}
\end{defn}
As suggested by the previous paragraph, we will be interested in composing the basic projector $P$ with the wave operator. Consequently, we will need the following lemma.

\begin{lemma}\label{canonicalP}
The stratified canonical relation $\mathcal{C}$ of the basic projector $P$ is given by 
\begin{equation}
\{(\eta_y,\xi_x)\,|\,\xi_x\in (T\mathcal{F})^0, \exists\, \boldsymbol{\alpha}\in \mathcal{G}^*(\mathcal{F}), \eta_y=\boldsymbol{\alpha}\cdot \xi_x\}.
\end{equation}
\end{lemma}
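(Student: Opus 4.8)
The plan is to identify the basic projector $P$ with a composition of geometric operators whose canonical relations are already understood, and then compute the composition. First I would write $P$, as indicated in Definition~\ref{basicprojector}, as the operator $\frac{1}{Lvol_k(x)}(\pi_*)|_{\Sigma_k}$ acting stratum by stratum: on each $\Sigma_k$ it is the fibre-integration (push-forward) along the $k$-dimensional leaves, rescaled by the smooth positive function $1/Lvol_k$. Multiplication by a smooth function does not change the canonical relation, so on each stratum the canonical relation of $P$ is that of the leafwise push-forward. The push-forward along a fibration (here, along the leaves of the regular foliation $(\Sigma_k,\mathcal{F}_k)$) is a Fourier integral operator whose canonical relation is the conormal to the graph of the submersion, which in our notation is exactly the set of pairs $(\eta_y,\xi_x)$ with $x,y$ in the same leaf and $\eta_y,\xi_x$ both annihilating the tangent space to that leaf and related by the holonomy pseudogroup — that is, precisely the condition ``$\xi_x\in(T\mathcal{F})^0$ and $\eta_y=\boldsymbol{\alpha}\cdot\xi_x$ for some $\boldsymbol{\alpha}\in\mathcal{G}^*(\mathcal{F})$.'' This is the content of Proposition~\ref{t:groupoidprop2} and the discussion of $\mathcal{G}^*(\mathcal{F})$: the groupoid orbits are exactly the leaves of the singular configuration space, so the groupoid action encodes both ``same leaf'' and the holonomy identification of covectors.

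The key steps, in order, would be: (i) recall that for a Riemannian submersion (or more generally a fibre bundle) $\rho\colon E\to B$, the push-forward $\rho_*$ is an FIO of order $-(\dim E-\dim B)/4$ (or whatever normalization is being used) associated to the canonical relation $\{(\rho^*\beta, \beta') : \beta'\in T^*_{\rho(e)}B, \rho^*\beta=\beta'\circ d\rho_e\text{ at }e\}$, i.e. the flow-out of the conormal bundle $N^*(\mathrm{graph}\,\rho)$; (ii) apply this to each foliated stratum $(\Sigma_k,\mathcal{F}_k)\to\mathcal{O}_k$, using that the adapted metric is bundle-like so the leaves are the fibres of a genuine (local) Riemannian submersion and the holonomy pseudogroup acts by the groupoid $\mathcal{G}^*(\mathcal{F})$; (iii) observe that rescaling by $1/Lvol_k$, a $0$th-order multiplication operator, leaves the canonical relation unchanged; (iv) assemble the strata: since $(T\mathcal{F})^0=\bigcup_k \Sigma^*_k$ and $P$ acts stratumwise, its stratified canonical relation is the union of the pieces, which is exactly the displayed set; (v) handle the totally disconnected case ($k=0$, leaves finite sets) separately — there $P$ is an averaging over a finite set, a sum of pullbacks by the (local) deck-type maps in $\Gamma$, each of which is an FIO associated to the graph of an isometry, and these graphs are again exactly the groupoid elements $\boldsymbol{\alpha}\in\mathcal{G}^*(\mathcal{F})$ with $\Sigma^*_0$-base points; so the formula still reads $\eta_y=\boldsymbol{\alpha}\cdot\xi_x$.

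The main obstacle I anticipate is making precise the sense in which $P$ is a (stratified) FIO at all and what ``canonical relation'' means across strata of different dimension — the leaf dimension $k$ jumps, so $P$ is not a classical FIO on $M$ but a collection of FIOs on the strata glued by the groupoid, exactly as set up in \cite{San2013}. The technical heart is therefore checking that the leafwise integration on $\Sigma_k$, when the leaves may be \emph{disconnected} with components permuted by $\Gamma$ (Definition~\ref{disconnected}), still has canonical relation governed by $\mathcal{G}^*(\mathcal{F})$: one must see that integrating over $L'_x=\Gamma\cdot L_x$ contributes, besides the connected-leaf conormal piece, the graphs of the finitely many isometries in $\Gamma$ carrying one component to another, and that these are precisely the extra groupoid elements. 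Once the disconnected-leaf bookkeeping is absorbed into $\mathcal{G}^*(\mathcal{F})$ — which the groupoid was designed to do — the statement follows by the standard composition-of-FIO calculus applied stratum by stratum, and the proof is essentially identical to the connected case treated in \cite{San2013}, with the wavefront/clean-composition conditions inherited from the good behaviour of the transverse flow established in Proposition~\ref{t:flowstrat2}.
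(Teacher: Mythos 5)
Your proposal follows, in substance, the same route as the paper: the paper's proof of Lemma~\ref{canonicalP} is essentially a citation of Proposition~27 of \cite{San2013}, which is exactly the stratumwise computation you reconstruct --- $P$ is the leafwise push-forward weighted by $\frac{1}{Lvol_k(x)}$ (Definition~\ref{basicprojector}), the weight affects only the symbol and not the canonical relation, and the push-forward's relation is the holonomy/groupoid relation on $(T\mathcal{F})^0$, assembled stratum by stratum via $\mathcal{G}^*(\mathcal{F})$.

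The one place where you diverge from the paper's written proof is the stratum $\Sigma^*_0$, and that is the only genuinely new content the paper adds beyond the citation. The paper asserts that over points of $\Sigma^*_0$ the basic projector does not propagate singularities --- the lifted foliation there is the trivial foliation by points (cf.\ Proposition~\ref{t:groupoidprop2}) --- so that $\mathcal{C}\cap(\Sigma^*_0\times\Sigma^*_0)=\{(\xi_x,\xi_x)\}$, the canonical relation of the identity. You instead populate the $k=0$ stratum with the graphs of the isometries in $\Gamma$. For the totally disconnected situation ($k_N=0$, e.g.\ a global quotient by a finite group) your description is the appropriate one and is consistent with the lemma's groupoid formula, since there $\mathcal{G}^*(\mathcal{F})$ is the lifted $\Gamma$-action and the trace is handled in the paper via \cite{DG1975} and \cite{SU2011}. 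But in the mixed singular-foliation setting $\Sigma^*_0$ also contains the exceptional covectors $\Sigma^*_e$, which lie over points whose leaves may have positive dimension, and the paper's point is precisely that over these covectors the stratified relation degenerates to the diagonal rather than being another fibration push-forward. Your step (v) should therefore separate the two situations and state explicitly that, relative to Proposition~27 of \cite{San2013}, the modification is the identity relation over $\Sigma^*_0$; as written, your treatment of $k=0$ as just another instance of the fibration calculus glosses over exactly the point the paper's proof is recording.
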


\begin{proof}
The proof is quite similar to that of the corresponding proposition (\cite{San2013}, Proposition 27) of the original proof, except that over points in $\Sigma_0^*$ the basic projector does not propagate singularities, and thus its canonical relation is that of the identity operator. In other words, the underlying foliation is the trivial foliation by points. Thus, 
\begin{equation}
\mathcal{C}\cap (\Sigma^*_0\times \Sigma^*_0)=\{(\xi_x,\xi_x)\,|\,\xi_x\in \Sigma^*_0\}.
\end{equation}
\end{proof}


We then have the following Poisson relation for the wave front set of the basic wave kernel:
\begin{thm}\label{t:sojourntimes2}
In the notation previously established, 
\begin{equation}\label{e:invariants}
WF\bigl(Trace(U_B(t,x,y) \bigr)\subset\{(T,\tau)\,|\,T\in\mathcal{RT}(M,\mathcal{F}),\,\tau<0\}.
\end{equation}

\end{thm}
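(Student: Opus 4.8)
# Proof Proposal

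The plan is to prove the Poisson relation by tracking the wave front set through the composition that defines the basic wave kernel and then through the trace operation. The starting point is the standard microlocal structure of the ordinary wave operator $U(t,x,y)$ on the compact manifold $M$ with its adapted metric: its wave front set is contained in the graph of the geodesic flow, i.e. $WF'(U) \subset \{(t,\tau;x,\xi;y,\eta) : \tau = \pm|\xi|_x,\ (x,\xi) = G^t(y,\eta)\}$, where $G^t$ is the cogeodesic flow. The basic wave kernel is $U_B(t,x,y) = P_x P_y U(t,x,y)$, so I would compute $WF'(U_B)$ by composing $WF'(U)$ with the stratified canonical relation $\mathcal{C}$ of the basic projector $P$ from both sides, using Lemma \ref{canonicalP} together with the transversality/clean-composition bookkeeping for wave front sets of kernels. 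Since $P$ is (up to a smooth multiplier $1/Lvol_k$) the leafwise averaging operator whose canonical relation is $\mathcal{C} = \{(\eta_y,\xi_x) : \xi_x \in (T\mathcal{F})^0,\ \exists\,\boldsymbol{\alpha}\in\mathcal{G}^*(\mathcal{F}),\ \eta_y = \boldsymbol{\alpha}\cdot\xi_x\}$, composing forces the covectors appearing in $WF'(U_B)$ to lie in $(T\mathcal{F})^0$ at both ends, and identifies endpoints that differ by a groupoid element. The key geometric input here is Proposition \ref{t:flowstrat2}: the cogeodesic flow restricted to $(T\mathcal{F})^0$ is exactly the transverse hamiltonian flow $\Phi^t$ and it preserves $(T\mathcal{F})^0$ (and respects the stratification), so the composition $\mathcal{C} \circ WF'(U) \circ \mathcal{C}$ is well-defined and lands inside $\{(\xi_x, \Phi^t(\xi_x)) : \xi_x \in (T\mathcal{F})^0\}$, modified by the groupoid action at the two ends.

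Next I would take the trace. Taking the trace of $U_B(t,x,y)$ corresponds to restricting the kernel to the diagonal $x=y$ and pushing forward along $M$ (i.e. integrating), and at the level of wave front sets this is the standard operation: $WF(\mathrm{Tr}\,U_B) \subset \{(t,\tau) : \exists\, (x,\xi),\ (t,\tau;x,\xi;x,\xi) \in WF'(U_B)\text{ — with the diagonal pairing of the two base covectors}\}$. Combining with the previous paragraph, a point $(T,\tau)$ can be in $WF(\mathrm{Tr}\,U_B)$ only if there exists $\xi_x \in (T\mathcal{F})^0$ with $\tau = -|\xi_x|_x < 0$ (the sign fixed by the convention $e^{-it\sqrt{\lambda}}$ in \eqref{e:P}, which singles out the negative characteristic variety) such that $\Phi^T(\xi_x)$ and $\xi_x$ are identified by a groupoid element of $\mathcal{G}^*(\mathcal{F})$ — that is, $\xi_x$ and $\Phi^T(\xi_x)$ lie in the same (possibly disconnected) leaf in the sense of Definition \ref{d:relclosed2}. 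By Definition \ref{defnRT} this says precisely $T \in \mathcal{RT}(M,\mathcal{F})$. Hence $WF(\mathrm{Tr}(U_B(t,x,y))) \subset \{(T,\tau) : T \in \mathcal{RT}(M,\mathcal{F}),\ \tau < 0\}$, which is the claim.

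I would handle the three cases of Definition \ref{d:relclosed2} uniformly: in every case the relevant closing-up condition is exactly "the groupoid $\mathcal{G}^*(\mathcal{F})$ identifies $\xi_x$ with $\Phi^T(\xi_x)$," and Lemma \ref{canonicalP} already packages the totally-disconnected case ($k_N=0$, action of $\Gamma$), the connected case (holonomy groupoid, as in \cite{San2013}), and the general disconnected case ($\Gamma$ acting on leaves) into the single relation $\mathcal{C}$. The $\Sigma_0^*$ subtlety noted in the proof of Lemma \ref{canonicalP} — that over $\Sigma_0^*$ the projector does not propagate singularities — is harmless here: over $\Sigma_0^*$ the groupoid is trivial, so the condition degenerates to $\Phi^T(\xi_x) = \xi_x$, i.e. a genuinely periodic transverse geodesic, which is still a length in $\mathcal{RT}(M,\mathcal{F})$.

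The main obstacle I anticipate is justifying the wave-front-set composition rigorously in the stratified/singular setting: $P$ is not a classical pseudodifferential or Fourier integral operator in the usual sense (its canonical relation $\mathcal{C}$ is stratified, with jumps in dimension across the strata $\Sigma_k^*$, and it fails to be a local graph), so the textbook composition theorem for $WF'$ does not apply verbatim. The way around this — and the reason the earlier sections were set up as they are — is to work stratum by stratum: on each $\Sigma_k^*$ the projector is, microlocally, the push-forward along a genuine Riemannian submersion times a smooth function (Definition \ref{basicprojector}, item 2), so the composition is governed by the honest FIO calculus there, with $\mathcal{G}^*(\mathcal{F})$ supplying the clean intersection structure (Proposition \ref{t:groupoidprop2}); one then glues the finitely many stratum-wise estimates using the flow-invariance of the stratification (Proposition \ref{t:flowstrat2}), which guarantees no singularity can "leak" from one stratum into another along the flow. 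The fact that $H$ is a conservative field with exact dual form (equation \eqref{meancurvature}) is not needed for this inclusion — it only becomes relevant for computing the leading coefficients later — so the proof of the Poisson relation itself is insensitive to the leafwise metric beyond its transverse part.
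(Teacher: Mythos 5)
Your proposal is correct and follows essentially the same route the paper takes: the paper's proof of Theorem \ref{t:sojourntimes2} is imported nearly verbatim from \cite{San2013} (and \cite{Sangroupwave}), i.e.\ compose $WF'(U)$ with the stratified canonical relation of $P$ from Lemma \ref{canonicalP}, use the flow-invariance of $(T\mathcal{F})^0$ and its strata from Proposition \ref{t:flowstrat2}, and then take the trace so that singularities land at the relatively closed periods of Definition \ref{d:relclosed2}, with the disconnected-leaf and $\Sigma_0^*$ cases handled exactly as you describe. Your stratum-by-stratum justification of the composition and the observation that the mean curvature term plays no role in the inclusion match the paper's treatment.
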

The proof of this result can be taken nearly verbatim from the prior result in \cite{San2013}, where the result was proved for a singular Riemannian foliation that arose as the closure of a regular Riemannian foliation or from \cite{Sangroupwave}. However, now we drop this assumption and simply assume that $(M,\mathcal{F})$ is a singular Riemannian foliation with closed leaves that does not necessarily arise as the closure of a regular Riemannian foliation. We note the the leaves may be disconnected.

\begin{remark}\label{ztk}
In light of Proposition \ref{t:flowstrat2}, we observe that each $Z^T$ can be decomposed with respect to the stratification and establish the following notation: 

\begin{equation}
Z^T=\bigcup_{k=0}^{k_N} Z^T_k,
\end{equation}
where for each $k$, $Z^T_k$ is the union of all the relatively closed curves of length $T$. (For some values of $k$, $Z^T_k$ may be empty.) Let $S(Z^T_k)=S^*(M)\cap Z^T_k;$ this set is saturated by the $k$ dimensional leaves since the transverse metric is constant on the leaves of $Z^T_k$. 
\end{remark}
\begin{remark}
We note that for a given $T$ the set $Z_T$ may contain both curves whose saturation contains exceptional leaves and curves that remain confined to the associated strata. This will be important in proving Theorem \ref{newtrace}.
\end{remark}

The following lemma from \cite{San2013} still holds in this case, but we alter its statement here to make the contribution of the function $\frac{1}{Lvol_k(x)}$ that arises from the symbol of $P$ explicit from Definition \ref{basicprojector}. 

\begin{lemma}\label{l:density}
There exists a smooth canonical density, $d\mu_{Z^T_k}$, on each component of the relative fixed point set $Z^T_k.$ Furthermore, $d\mu_{Z^T_k}=\frac{1}{Lvol_k(x)}d\pi^*\chi_{\mathcal{F}_k}\otimes d\mu'_{Z^T_k}$ where $d\mu'_{Z^T_k}$ is a canonical measure on the orbifold $N(Z^T_k)= S(Z^T_k)/\widetilde{T\mathcal{F}}$ and $\chi_{\mathcal{F}_k}$ is the characteristic measure on $(\Sigma_k,\mathcal{F}_k).$
\end{lemma}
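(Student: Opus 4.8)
\textbf{Proof proposal for Lemma \ref{l:density}.}

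The plan is to construct the density $d\mu_{Z^T_k}$ by the same microlocal bookkeeping used in \cite{San2013}, Lemma (density lemma), but keeping track of the extra scalar factor coming from the symbol of the basic projector $P$. Recall from Definition \ref{basicprojector} that $P$ restricted to $\Sigma_k$ is, up to the smooth function $\frac{1}{Lvol_k(x)}$, the push-forward operator $(\pi_*)|_{\Sigma_k}$, whose canonical relation is the one described in Lemma \ref{canonicalP}. The composition $U_B(t,x,y)=P_xP_yU(t,x,y)$ is therefore a Lagrangian (FIO) distribution whose canonical relation is the clean composition of the wave propagator's canonical relation with the stratified relation $\mathcal{C}$ of $P$ over each stratum; under the standing clean intersection hypothesis this composition is clean with a fixed excess on each component of $Z^T_k$. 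The principal symbol of the composed operator is then the product of the principal symbols, and the symbol of $P$ contributes exactly the factor $\frac{1}{Lvol_k(x)}$ times the half-density symbol of $(\pi_*)|_{\Sigma_k}$.

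First I would fix a connected component of $Z^T_k$ and work microlocally near a point of $S(Z^T_k)$. On that component the transverse Hamiltonian flow $\Phi^t$ is, by Proposition \ref{t:flowstrat2}, confined to $\Sigma^*_k$ (or to $\Sigma^*_k\cup\Sigma^*_e$), so the relevant canonical relation is a submanifold of $\Sigma^*_k\times\Sigma^*_k$ foliated by the $k$-dimensional leaves of $\widetilde{T\mathcal{F}_k}$ from Proposition \ref{t:groupoidprop2}. Taking the trace amounts to restricting to the diagonal and integrating; by the standard stationary-phase/clean-composition formula (as in Duistermaat--Guillemin, adapted to the foliated setting in \cite{San2013}) the fibre integration over the excess variables produces a smooth density on the fixed-point set. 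The symbol of $(\pi_*)|_{\Sigma_k}$ is the canonical half-density along the leaf directions, i.e.\ it integrates out the leaf-wise coordinates and contributes the characteristic measure $\chi_{\mathcal{F}_k}$ on $(\Sigma_k,\mathcal{F}_k)$ pulled back by $\pi$; what remains transverse to the leaves is the Duistermaat--Guillemin canonical density $d\mu'_{Z^T_k}$ on the orbifold quotient $N(Z^T_k)=S(Z^T_k)/\widetilde{T\mathcal{F}}$. The extra scalar $\frac{1}{Lvol_k(x)}$ from the definition of $P$ rides along untouched because it is a smooth basic-in-the-leaf-variable multiplier, giving the claimed factorization $d\mu_{Z^T_k}=\frac{1}{Lvol_k(x)}\, d\pi^*\chi_{\mathcal{F}_k}\otimes d\mu'_{Z^T_k}$.

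I expect the main obstacle to be verifying that the leaf-wise integration genuinely separates as a tensor product: one must check that the canonical density produced by the clean-composition formula really does split into a piece living along the leaves (the characteristic measure, from the symbol of $\pi_*$) and a piece living on the transverse orbifold quotient, rather than becoming entangled through the excess variables of the composition. This is where the hypothesis that the metric is adapted (so bundle-like for each $(\Sigma_k,\mathcal{F}_k)$, cf.\ \eqref{bundlelikemetric}) does the work: it guarantees that the transverse and leaf-wise directions are $\omega$-orthogonal in the relevant sense, so the symplectic volume factors, and the fact that $d\mu'_{Z^T_k}$ descends to the quotient follows because the transverse metric, and hence the whole construction downstream of the leaf-wise averaging, is constant along the leaves of $Z^T_k$ (Remark \ref{ztk}). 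A secondary point to handle carefully is the totally disconnected case and the contribution of $\Sigma^*_0$, where by Lemma \ref{canonicalP} the projector acts as the identity and $Lvol_0(x)$ must be read as the cardinality $|L_x|$; there the leaf-wise measure is a counting measure and the factorization is immediate. Apart from this, the argument is a line-by-line transcription of \cite{San2013} with the symbol of $P$ carried through, so no new analytic input is required.
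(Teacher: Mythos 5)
Your proposal is correct and takes essentially the same route as the paper, which offers no new argument here: it simply imports the density lemma from \cite{San2013} with the statement rewritten so that the factor $\frac{1}{Lvol_k(x)}$ coming from the symbol of the basic projector $P$ (Definition \ref{basicprojector}) is explicit, which is exactly the bookkeeping you carry out. Your splitting of the density into the pulled-back characteristic measure along the leaves and the Duistermaat--Guillemin canonical density on $N(Z^T_k)$ matches the construction the paper relies on from \cite{San2013}.
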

\begin{rem}\label{r:leafvolume}
It will be useful in what follows to note the observation in \cite{T1}, p. 37, that $\chi_{\mathcal{F}_k}$ is none other than $dLvol_k(x),$ the volume for the $k$-dimensional leaves of $(\Sigma_k, \mathcal{F}_k),$ and so $\frac{1}{Lvol_k(x)}d\pi^*\chi_{\mathcal{F}_k}$ is a probability measure on each leaf in the sense that each leaf has volume one with respect to this measure. Let $d\eta_k$ denote the measure $\frac{1}{Lvol_k(x)}d\pi^*\chi_{\mathcal{F}_k.}$
\end{rem}

\begin{defn}\label{d:clean}
For each foliated stratum of the configuration space $(\Sigma^*_k, \widetilde{T\mathcal{F}_k} ),$  let $\widetilde{T\mathcal{F}_k}$ denote the transverse distribution.  Then the local diffeomorphisms $dh_{\alpha}$ of Definition \ref{d:relclosed2} define a lifted holonomy action on the leaves of the foliation $(\Sigma^*_k,\widetilde{T\mathcal{F}_k}),$ which we denote by $d\widetilde{h}_{(\alpha,\xi_x)},$ as follows. For any $\boldsymbol{\alpha}\in \mathcal{G}^*(\mathcal{F})$ with $\alpha(0)=x$, $\alpha(1)=y$, and $\xi_x$ with $\eta_y=(dh_{\alpha^{-1}})^*(\xi_x)$, then
\begin{equation}
d\widetilde{h}_{(\alpha,\xi_x)}:\widetilde{N_{\xi_x}\mathcal{F}_k}\rightarrow\widetilde{N_{\eta_y}\mathcal{F}_k}.
\end{equation}
We say that the relatively closed set of curves $Z^T_k$ is {\bf clean} if (1) $Z^T_k$ is a smooth submanifold of $(T\mathcal{F})^0;$ and (2) for every $\xi_x\in Z^T_k$ with $\eta_y=(dh_{\alpha}^{-1})^*\xi_x=\Phi^T(\xi_x)$ then $d\Phi^T_{\xi_x}(T_{\xi_x}Z^T_k)=T_{\eta_y}Z^T_k$ for $\alpha\in \mathcal{G}(\mathcal{F})$ with $\alpha(0)=x$ and $\alpha(1)=y$. The condition that $\eta_y=(dh_{\alpha}^{-1})^*\xi_x$ implies that for all $\xi_x\in Z^T_k$
\begin{equation}\label{e:cleanone}
d\Phi^T_{\xi_x}(\widetilde{N_{\xi_x}\mathcal{F}_k})=d\widetilde{h}_{(\alpha,\xi_x)}(\widetilde{N_{\xi_x}\mathcal{F}_k})=\widetilde{N_{\eta_y}\mathcal{F}_k}.
\end{equation}
\end{defn}
We will need the following definition:
\begin{defn}
Let $S$ be a saturated set with respect to the singular foliation of the stratified singular configuration space $(T\mathcal{F})^0.$ 
The {\bf quotient dimension} of $S$ is defined as follows
\begin{equation}\label{qdim}
qdim(S)=max_k (dim(S\cap \Sigma^*_k)-k).
\end{equation}
\end{defn}
\begin{remark}
We establish the following notation. Let $\Gamma^T=\{(T,\tau)\,|\,\tau<0\}$ denote the ray over $T\in \mathcal{RT}(M,\mathcal{F})$.  Henceforward, we assume that the set $Z^T_k$ of relative fixed points of the hamiltonian flow $\Phi^T$ on $(T\mathcal{F})^0$ is clean for all $T\in  \mathcal{RT}(M,\mathcal{F})$ in the sense of Definition \ref{d:clean}. Let $Z^T_{max}$ denote the union of the components of $Z^T$ for which the quotient dimension is largest. Henceforward, let $Z^T_k$ denote the strata that make up $Z^T_{max}$ to avoid excessive subscripts. Note: the set $S(Z^T_{max})$ is also saturated. The relatively closed hamiltonian arcs of a given length $T$ of maximal quotient dimension decompose as previously noted in Remark \ref{ztk} into conic submanifolds $Z^T_k$ whose connected components are finite in number. Let $\varepsilon^{T}:=dim(S(Z^{T}_k)),$ and let $e^T$ be the corresponding quotient dimension so that $\varepsilon^{T}=k+e^{T}.$
\end{remark}

Then we have the following in the notation above:

\begin{thm}\label{t:newsirftrace}
Near $t=T\in \mathcal{RT}(M,\mathcal{F})$ and assuming that the $Z^T_k$ satisfy Definition \ref{d:clean}, then, in the above notation,  
\begin{equation}
Trace\bigl(U_B(t,x,y))\bigr)=\sum_{T\in \mathcal{RT}(M,\mathcal{F})}\nu_{T}(t),
\end{equation}
where $\nu_T\in I^{-1/4-e_T/2-j}(\mathbb{R},\Gamma^T,\mathbb{R}).$ Furthermore, $\nu_{T}$ has an expansion of the form
\begin{equation}\label{e:expansionnewsirftrace}
\nu_{T}(t)=\sum_{k=0}^{k_N} e^{\frac{i\pi m^T_k}{4}}\sum_{j=0}^\infty\sigma_j(T,k)(t-T+i0)^{-\frac{e_T+1}{2}-j}\,mod\,C^\infty(\mathbb{R}),
\end{equation}
where $m^T_k$ is the Maslov index of $Z^{T}_{k}.$ 
If $Z^T_k\cap\Sigma^*_k=\emptyset$ then we set $\sigma_j(T,k)=0$.

In particular, the leading term $\sigma_0(T,k)$ is
\begin{equation}\label{e:newsigmazero}
\sigma_0(T,k)=\Bigl[\int_{S(Z^T_{k})}\sigma(U) d\mu_{Z^T_{k}}\Bigr]\,\tau^{\frac{e_T-1}{2}}\sqrt{d\tau}.
\end{equation}

Furthermore, if $Z^T_k$ contains no curves that pass through exceptional leaves, then the leading term $\sigma_0(T,k)$ is
\begin{equation}\label{e:newsigmazero2}
\sigma_0(T,k)=\Bigl[\int_{N(Z^T_{k})}\sigma(U) d\mu'_{Z^T_k}\Bigr]\,\tau^{\frac{e_T-1}{2}}\sqrt{d\tau}.
\end{equation}

\end{thm}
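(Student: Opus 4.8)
The plan is to follow the standard microlocal template for wave-trace asymptotics (as in Duistermaat–Guillemin and its orbifold analogue), but carried out on the stratified configuration space $(T\mathcal{F})^0$ and with the basic projector $P$ inserted on both sides of the wave kernel. First I would write $U_B(t,x,y)=P_xP_yU(t,x,y)$ and observe that, by Lemma~\ref{canonicalP} and H\"ormander's calculus for Fourier integral operators associated to (stratified) canonical relations, $U_B(t)$ is a Lagrangian distribution whose canonical relation is the composition of the wave-flow relation with $\mathcal{C}\times\mathcal{C}$; restricting to the diagonal and taking the trace, the wave front set of $\mathrm{Tr}\,U_B(t)$ lies over $\mathcal{RT}(M,\mathcal{F})$, which is Theorem~\ref{t:sojourntimes2}. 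Near a fixed $T$, the contributions localize to a conic neighborhood of the relative fixed-point set $Z^T$, and by Proposition~\ref{t:flowstrat2} this set decomposes as $\bigcup_k Z^T_k$ with each piece confined to a single stratum $\Sigma^*_k$ (possibly meeting $\Sigma^*_e$). Only the components of maximal quotient dimension $e^T$ contribute to the leading order, which fixes the degree $-1/4-e_T/2$ of $\nu_T$ in $I^{-1/4-e_T/2-j}(\mathbb{R},\Gamma^T,\mathbb{R})$ and the exponent $-\tfrac{e_T+1}{2}-j$ in the expansion \eqref{e:expansionnewsirftrace}.

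Next I would carry out the stationary-phase / clean-intersection reduction on each $Z^T_k$ separately. Using the cleanness hypothesis of Definition~\ref{d:clean} — that $Z^T_k$ is a smooth submanifold and $d\Phi^T$ preserves its tangent space, equivalently \eqref{e:cleanone} holds for the lifted holonomy — the generalized stationary phase lemma (Duistermaat–Guillemin, as adapted to manifolds with the extra holonomy identification, cf. the orbifold treatment) yields that the principal symbol of $\nu_T$ restricted to the $k$-stratum is the integral of the principal symbol of the wave propagator over the fixed-point manifold against the canonical density on $Z^T_k$, times the Maslov factor $e^{i\pi m^T_k/4}$ and the half-density $\tau^{(e_T-1)/2}\sqrt{d\tau}$ on $\Gamma^T$. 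This is exactly \eqref{e:newsigmazero}, with $S(Z^T_k)=S^*M\cap Z^T_k$ and $d\mu_{Z^T_k}$ the density of Lemma~\ref{l:density}; the cases where $Z^T_k\cap\Sigma^*_k=\emptyset$ give $\sigma_j(T,k)=0$ since there is no stationary set in that stratum. The sum over $k$ in \eqref{e:expansionnewsirftrace} reflects that the different strata contribute Lagrangian distributions with a priori different Maslov indices but the same leading order.

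The final refinement \eqref{e:newsigmazero2} is where the structure of the basic projector enters decisively. By Lemma~\ref{l:density}, $d\mu_{Z^T_k}=\tfrac{1}{Lvol_k(x)}\,d\pi^*\chi_{\mathcal{F}_k}\otimes d\mu'_{Z^T_k}$, and by Remark~\ref{r:leafvolume} the factor $\tfrac{1}{Lvol_k(x)}\,d\pi^*\chi_{\mathcal{F}_k}=d\eta_k$ is a probability measure along each $k$-dimensional leaf. When $Z^T_k$ contains no curve passing through an exceptional leaf, the integrand $\sigma(U)$ — the principal symbol of the wave propagator, which depends only on the transverse metric $g_Q$ and the transverse geodesic flow — is constant along the leaves of the foliation $\widetilde{T\mathcal{F}_k}$ on $S(Z^T_k)$; hence the leafwise integration against $d\eta_k$ simply produces a factor of $1$, and the integral collapses to $\int_{N(Z^T_k)}\sigma(U)\,d\mu'_{Z^T_k}$ over the orbifold quotient $N(Z^T_k)=S(Z^T_k)/\widetilde{T\mathcal{F}}$. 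I would verify carefully that $\sigma(U)$ is genuinely leafwise-constant: this follows because the wave propagator's symbol is transported by the transverse Hamiltonian flow $\Phi^t$, which by Proposition~\ref{t:flowstrat2} preserves the leaves and acts on the transverse quotient $N(Z^T_k)$, together with the fact that $P$ acts as the identity microlocally over $\Sigma^*_0$ (Lemma~\ref{canonicalP}) so no leafwise-varying symbol correction is introduced when the curves avoid $\Sigma^*_e$.

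The main obstacle I anticipate is making the clean stationary-phase argument rigorous \emph{across the stratification} — in particular justifying that the localized trace near $T$ genuinely splits as a finite sum of honest Lagrangian distributions indexed by $k$, each obtained from a bona fide clean fixed-point manifold inside a single stratum, with no cross-terms or lower-stratum boundary contributions spoiling the leading asymptotics. This requires controlling how the FIO calculus behaves at the stratum boundaries $\overline{\Sigma^*_k}\setminus\Sigma^*_k\subset\Sigma^*_e$, using \eqref{e:singstrat1}–\eqref{e:singstrat3} to ensure that a relatively closed arc either stays in $\Sigma^*_k$ or escapes into $\Sigma^*_e$ but not into a different $\Sigma^*_\ell$, and then invoking the cleanness assumption to excise the exceptional locus when computing \eqref{e:newsigmazero2}. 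The identification of the canonical density and Maslov index on each $Z^T_k$ is then routine given \cite{San2013}, but the bookkeeping that glues the strata-wise computations into the single expansion \eqref{e:expansionnewsirftrace} is the delicate point.
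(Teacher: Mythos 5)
Your proposal follows essentially the same route as the paper: the paper's own argument is a sketch that defers to the clean-intersection Duistermaat--Guillemin computation of \cite{San2013} (and \cite{DG1975}, \cite{SU2011} for the totally disconnected case), modified only by the canonical relation of $P$ from Lemma~\ref{canonicalP}, and it obtains \eqref{e:newsigmazero2} exactly as you do, by integrating out the leafwise direction against the probability measure $d\eta_k$ using that $\sigma(U)$ is leafwise constant when the curves avoid exceptional leaves. Your reconstruction of the microlocal steps, the stratum-by-stratum localization via Proposition~\ref{t:flowstrat2}, and the leading-order bookkeeping in terms of the maximal quotient dimension $e^T$ matches the paper's treatment, so no substantive difference or gap to report.
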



\begin{remark}\label{clarifications}
The statement of this theorem is slightly different than that found in \cite{San2013} to clarify several points.
\begin{enumerate} 
\item This generalization of that theorem clarifies the order of the singularity. In the previous version, the dimension of the fixed point set was maximized and the leaf dimension was minimized, and this tacitly assumes that the contributions come from only one stratum, but this is not necessarily the case. The contribution of highest degree comes from the component(s) of the fixed point set of largest quotient dimension, and there is no ostensible reason why different strata of the fixed point set could not contribute subsets of the same quotient dimension. For this reason we take the sum over all the strata that might contribute to the fixed point set of highest quotient dimension. The order of the singularity was expressed in previous versions in terms of $\varepsilon^T=e^T+k$ in the previous remark, but this degree is always reduced by the dimension of the leaves, $k$, and so it makes sense to express the degree in terms of $e^T$ instead.
\item We emphasize the role of the leaf volumes in light of Lemma \ref{l:density} and Remark \ref{r:leafvolume}, which are closely related to the mean curvature vector fields.
\item In the event that there are no curves passing through exceptional leaves, the last part of the theorem will allow us to calculate the contribution $\sigma_0(T,k)$ solely in terms of the orbifold $\mathcal{O}_k,$ as we shall see in the next proposition following the proof of the above theorem.
\end{enumerate}
\end{remark}
\begin{proof}{Sketch of Proof.}
We first consider the case of totally disconnected leaves. This is essentially the original result in \cite{DG1975}, or in the case of orbifolds, the result of E. Stanhope and A. Uribe in \cite{SU2011}. Generalizing the proof to allow for such disconnected leaves boils down to expanding the definition of the relatively closed geodesics and accounting for the presence of totally disconnected leaves with Remark \ref{basicprojector}. 

For the case of leaves with connected components, the proof of the generalization can be taken almost verbatim from the proof of the corresponding result in \cite{San2013}, aside from the clarifications noted in Remark \ref{clarifications}. The prior result in \cite{San2013}, proved the result for a singular Riemannian foliation that is the closure of a regular Riemannian foliation whose leaf dimension is $p$. 

We make note of the following changes in notation from the proof in \cite{San2013}. Here we denote the singular configuration space by $(T\mathcal{F})^0$ instead of $(T\overline{\mathcal{F}})^0$ in the original proof. We note that lift of a leaf of the form $L_x=\{x\}$ to $(T\mathcal{F})^0$ is just $\{\xi_x\}$ for $\xi_x\in T^*M\setminus\{0\}$. Thus, the globally defined space corresponding to the underlying foliation that was formerly denoted by $(T\mathcal{F})^0$ is really just $T^*M\setminus\{0\}$ as in the discussion preceding Definition \ref{d:clean}. Even if $\Sigma_0=\emptyset$, the stratified configuration space will still have leaves consisting of singleton covectors over the exceptional points.

We make note only of the minor changes required to generalize the result. The main modification consists of setting the leaf dimension of the underlying foliation to $p=0$ in the prior proof and by accounting for the fact that the canonical relation for the basic projector is slightly different in this case, as described in Lemma \ref{canonicalP}. The results up to \eqref{e:newsigmazero} follow immediately.

To show equation \eqref{e:newsigmazero2}, we simply integrate over the leafwise portion via the density $|d\eta_k|,$ which we may due without difficulty since every geodesic in the set $Z^T$ is contained entirely in the regular foliation $\Sigma^*_k,$ by hypothesis.


\end{proof}

The rest of the result follows from the following proposition.

\begin{prop}\label{orbifoldcoefficient}
If $T$ is such that the $Z^T_{max}$ does not contain curves that pass through exceptional leaves then for each $k$ with $Z^T_k\not=\emptyset$
\eqref{e:newsigmazero2} simplifies to 
\begin{equation}
\sigma_0(T,k)=\sigma_0(T,\mathcal{O}_k),
\end{equation}
where $\sigma_0(T,\mathcal{O}_k)$ denotes the first wave invariant of the orbifold $\mathcal{O}_k$ for $T\in \mathcal{RT}(\Sigma_k,\mathcal{F}_k)$ which is equal to $Lspec(\mathcal{O}_k).$
\end{prop}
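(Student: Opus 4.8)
The plan is to unwind Definition~\ref{basicprojector}, Lemma~\ref{canonicalP}, and Lemma~\ref{l:density} in the case where $Z^T_{max}$ avoids exceptional leaves, and simply recognize that the resulting integral formula~\eqref{e:newsigmazero2} is \emph{literally} the Stanhope--Uribe orbifold wave invariant for $\mathcal{O}_k = \Sigma_k/\mathcal{F}_k$. The starting point is that, under the hypothesis, every relatively closed curve in $Z^T_k$ is contained in the single regularly foliated stratum $(\Sigma_k,\mathcal{F}_k)$, so the transverse hamiltonian flow $\Phi^t$ restricted to $\Sigma^*_k$ descends to the orbifold geodesic flow on $T^*\mathcal{O}_k$ (Proposition~\ref{t:flowstrat2} plus the fact that $\pi_Q|_{\Sigma_k}$ is a Riemannian submersion onto $\mathcal{O}_k$). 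Consequently $N(Z^T_k) = S(Z^T_k)/\widetilde{T\mathcal{F}}$ is exactly the fixed-point set of the time-$T$ orbifold geodesic flow on $S^*\mathcal{O}_k$, and the cleanness hypothesis of Definition~\ref{d:clean} on $Z^T_k$ is exactly the clean intersection hypothesis on that orbifold fixed-point set.

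The key steps, in order: (1) Observe that under the no-exceptional-leaves hypothesis the canonical relation $\mathcal{C}$ of the basic projector $P$ restricted over $S(Z^T_k)$ is the one coming from the regular foliation groupoid $\mathcal{G}(\mathcal{F}_k)$, i.e.\ the graph of the holonomy pseudogroup of $(\Sigma_k,\mathcal{F}_k)$, whose quotient is the orbifold groupoid presenting $\mathcal{O}_k$. (2) Identify the basic wave kernel $U_B(t,x,y)$ along $\Sigma_k$ with the pullback under $\pi_Q$ of the orbifold wave kernel $U_{\mathcal{O}_k}(t,\bar x,\bar y)$: this is the content of comparing $\Delta_B$ on basic functions over $\Sigma_k$ with $\Delta_{\mathcal{O}_k}$ — one uses that $P$ averages over leaves, so $\sigma(U)$ restricted to $S(Z^T_k)$ and pushed to $N(Z^T_k)$ is precisely the principal symbol of the orbifold wave propagator. (3) Invoke Remark~\ref{r:leafvolume}: the factor $\frac{1}{Lvol_k(x)}d\pi^*\chi_{\mathcal{F}_k} = d\eta_k$ is a leafwise probability measure, so integrating it out of $d\mu_{Z^T_k}$ in \eqref{e:newsigmazero2} leaves exactly $d\mu'_{Z^T_k}$, which by Lemma~\ref{l:density} is the canonical orbifold measure on $N(Z^T_k)$. (4) Conclude that
\[
\sigma_0(T,k) = \Bigl[\int_{N(Z^T_k)} \sigma(U_{\mathcal{O}_k})\, d\mu'_{Z^T_k}\Bigr]\, \tau^{\frac{e_T-1}{2}}\sqrt{d\tau},
\]
and recognize the bracketed quantity as the leading wave coefficient $\sigma_0(T,\mathcal{O}_k)$ from the orbifold trace formula of \cite{SU2011} (the orbifold generalization of \cite{DG1975}), valid precisely for $T\in Lspec(\mathcal{O}_k) = \mathcal{RT}(\Sigma_k,\mathcal{F}_k)$.

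The main obstacle I anticipate is step (2): making rigorous the identification of the \emph{symbol} $\sigma(U)$ appearing in \eqref{e:newsigmazero2} — which is built from the wave kernel of the full Laplacian on $M$ composed with the basic projector on both sides — with the symbol of the orbifold wave propagator on $\mathcal{O}_k$. One must check that the transverse part of the full hamiltonian flow genuinely agrees with the orbifold geodesic flow (this is where the adapted/bundle-like structure of the metric in \eqref{bundlelikemetric} is essential, so that $\mathcal{H} = \mathcal{H}_{\mathcal{F}^\perp}$ on $(T\mathcal{F})^0$), and that the Maslov factor $m^T_k$ coincides with the orbifold Maslov index — both should follow from the fact that $\pi_Q|_{\Sigma_k}$ intertwines the two flows, but care is needed because $Z^T_k$ could a priori include curves that are relatively closed in $\Sigma_k$ only via nontrivial holonomy, i.e.\ the curves that detect the orbifold singularities of $\mathcal{O}_k$ itself; these are exactly accounted for by passing to the orbifold groupoid rather than just the manifold $\Sigma_k$, so the Stanhope--Uribe formula (not merely Duistermaat--Guillemin) is the correct target. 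A secondary, purely bookkeeping point is confirming that $e_T$ as defined here (quotient dimension of $Z^T_{max}$) matches the dimension parameter in the orbifold trace formula, which is immediate from $\varepsilon^T = k + e^T$ and $\dim N(Z^T_k) = \varepsilon^T - k = e^T$.
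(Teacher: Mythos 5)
There is a genuine gap, and it sits exactly at the step you flagged as the ``main obstacle'' (your step (2)) but then did not resolve. The quantity $\sigma(U)$ appearing in \eqref{e:newsigmazero2} is not determined by the principal symbol and the flow alone: by Theorem 4.5 of \cite{DG1975} it is, on the fixed point set, the exponential $\exp\bigl(\tfrac{-i}{T}\int_0^T sub(\Delta_B)(\Phi^s(x,\xi))\,ds\bigr)$ of the integrated \emph{subprincipal} symbol of the operator along the closed orbit. The basic Laplacian is $\Delta_B=\Delta_{\mathcal{O}}-H$ (locally over the stratum), so while $sub(\Delta_{\mathcal{O}})=0$, one has $sub(\Delta_B)=-h(x,\xi)$, the mean curvature form, which is generically nonzero. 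Your argument identifies the flows, the measures, and the fixed point sets, but then simply asserts that the pushed-down symbol ``is precisely the principal symbol of the orbifold wave propagator''; without further input this is false at the level needed, and if it were taken at face value the proposition would fail to say anything about the mean curvature --- which is the whole point of the result. The missing idea, which is the heart of the paper's proof, is that the mean curvature contribution vanishes upon integration around a relatively closed geodesic: by \eqref{meancurvature}, $h$ is dual to $H=\nabla\log\bigl(\tfrac{1}{Lvol(x)}\bigr)$, so $\alpha=d\log\bigl(\tfrac{1}{Lvol}\bigr)$ is an exact form with \emph{basic} potential, and since the endpoints $\gamma(0)$ and $\gamma(T)$ of a relatively closed geodesic lie in the same leaf, $\int_0^T h(\Phi^s(x,\xi))\,ds=\int_\gamma\alpha=0$. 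Only after this cancellation can one match the bracketed integral with the Stanhope--Uribe coefficient $\sigma_0(T,\mathcal{O}_k)$.

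A second, related omission concerns the singular strata $k<k_N$. Your proposal treats all strata uniformly by appealing to the submersion $\Sigma_k\to\mathcal{O}_k$, but the paper must (and does) carry out a local computation in distinguished coordinates adapted to the tubular neighborhood of $\Sigma_k$ --- using that $\Sigma_k$ is transversally totally geodesic, choosing a parallel orthonormal frame for $(T_w\Sigma_k)^\perp$, and splitting $\det(g)$ blockwise --- to show that the restriction of $sub(\Delta)$ to $Z^T_k$ equals $sub(\Delta_k-H_k)$, i.e.\ that the normal directions contribute nothing on $\{\boldsymbol{z}=0,\boldsymbol{\eta}=0,\boldsymbol{\zeta}=0\}$. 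Only then does the exactness argument, now with $H_k=-\nabla\log(Lvol_k)$ on the stratum, finish the proof. Your measure bookkeeping (step (3)) and the identification $N(Z^T_k)=S(\tilde{Z}^T_k)$, $d\mu'_{Z^T_k}=d\mu_{\tilde{Z}^T_k}$ are consistent with the paper, but as written the proposal proves only the formal shape of the formula, not the equality $\sigma_0(T,k)=\sigma_0(T,\mathcal{O}_k)$.
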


\begin{proof}
We proceed by cases. First consider the curves that correspond to geodesics over the regular region.
Suppose $T$ is a relatively closed period in $\mathcal{RT}(M,\mathcal{F})$ such that $Z^T_k=\emptyset$ for all $k<k_N$.  
By equations \eqref{e:expansionnewsirftrace}, and \eqref{e:newsigmazero2}
the expansion of the basic wave trace in a neighborhood of $T$ is of the form
\begin{equation}
\nu_T(t)=\sum_{j=0}^\infty \sigma_j(T)(t-T+i0)^{-\frac{e_T+1}{2}-j}
\end{equation}
and
\begin{equation}
\sigma_0(T,k)=\Bigl[\int_{N(Z^T_{k_N})}\sigma(U) d\mu'_{Z^T_{k_N}}\Bigr]\,\tau^{\frac{e_T-1}{2}}\sqrt{d\tau},
\end{equation}
where $\sigma(U)$ denotes the symbol of the (ordinary) wave kernel which depends on the entire metric, including the leaf-wise part.

We will demonstrate the result by calculating the first wave invariants for an appropriate $T$ on both $M$ and $\mathcal{O},$ and comparing them. To this end, denote $\sigma_0(T,(M,\mathcal{F}))$ and $\sigma_0(T,\mathcal{O})$ the first wave invariants for the basic spectrum for $(M, \mathcal{F})$ and $\mathcal{O},$ respectively. Let $\sigma(U)$ denote the symbol of the wave operator on $M$, and let $\sigma(U_\mathcal{O})$ denote the symbol of the wave operator on $\mathcal{O}.$

Now proceeding as in \cite{DG1975}, we recall from Theorem 4.5 of \cite{DG1975} that $\sigma(U)$ is given by the expression
\begin{equation}\label{firstwave}
\frac{1}{2\pi} \int_{S(Z^T_{k_{N}})}exp\Bigl(\frac{-i}{T} \int _0^T sub(\Delta_B)(\Phi^s(x,\xi))\,ds\Bigr)\, d\mu_{Z^T_{k_{N}}} .
\end{equation}

Next we calculate of the subprincipal symbol of the basic Laplacian locally on $\Sigma_{k_N}.$ Recall from \cite{DG1975} the following calculus of subprincipal symbols: if $Q$ is an operator of degree $m$  with principal symbol $q(x, \xi),$ for coordinates $(x,\xi)\in T^*M$ then
\begin{equation}
sub(Q)(x,\xi)=q_{m-1}(x,\xi)-\frac{1}{2i}\sum_{j=1}^n\frac{\partial^2 q}{\partial x_j\partial\xi_j}(x,\xi),
\end{equation}
and also:
\begin{equation}
sub(Q^\alpha)=\alpha p^{\alpha-1} sub(Q)
\end{equation}
Thus if $Q=\Delta_B=\Delta_{\mathcal{O}}-H$ (locally on $\Sigma_{k_N}$) where $H$ is the mean curvature vector field, then locally, 
\begin{equation}
sub(\Delta_\mathcal{O}-H)(x,\xi)=-h(x,\xi)+sub(\Delta_\mathcal{O})(x,\xi)=-h(x,\xi),
\end{equation}
since it is well-known that $sub(\Delta_\mathcal{O})=0$.
Hence,
\begin{equation}
sub([\Delta_\mathcal{O}-H]^{1/2})(x,\xi)=\frac{1}{2}[\|\xi\|_x]^{-1/2}sub(\Delta_\mathcal{O}-H)(x,\xi).
\end{equation}
On the fixed point sets, which are in the cosphere bundle, $\|\xi\|_x=1$, it follows that when restricted to those sets, we have:
\begin{equation}
sub([\Delta_\mathcal{O}-H]^{1/2})(x,\xi)=-\frac{1}{2}h(x,\xi),
\end{equation}
where $h(x,\xi)$ is the principal symbol of $H$ regarded as a first-order operator. In fact, it is just the mean curvature form, which we denote by $\alpha$.

The proof of the result will follow from showing that 
\begin{equation}\label{integralzero}
 \int _0^T h(\Phi^s(x,\xi))\,ds=0,
\end{equation}
and thus the first wave invariant for $\Delta_B$ is the same as that for $\Delta_O$. But this follows from the fact that $H$ is conservative, by \eqref{meancurvature}, with a basic potential function and from the fact that the hamiltonian flow $\Phi^t$ is dual to the geodesic flow. In other words, \eqref{integralzero} is the integral of an exact basic form around a geodesic curve that is closed in the quotient. As such, its endpoints lie in the same leaf. In fact, $\alpha=d\Bigl(\ln(\frac{1}{Lvol(x)})\Bigr)$ and so is the differential of a basic function. More precisely, we have the following:

\begin{eqnarray}
\int_0^{T}h(\Phi^s(x,\xi))\,ds&=&\int_0^T g(H(\gamma(s)),\dot{\gamma}(s))\,ds\\
&=&\int_{\gamma} \alpha\\
&=&\int_0^T d(\log(\frac{1}{Lvol(\gamma(s))})\,ds=0,
\end{eqnarray}
since $\gamma(0)=\gamma(T)$ belong to the same leaf.

On the other hand, from \cite{SU2011}, $\sigma_0(T,\mathcal{O})$ is equal to 
\begin{equation}
\Bigl[\int_{S(\tilde{Z}^T_{k_N})}\sigma(U_\mathcal{O}) d\mu_{\tilde{Z}^T_{k_N} } \Bigr]\,\tau^{\frac{e_T-1}{2}}\sqrt{d\tau},
\end{equation}
where $\sigma(U_\mathcal{O})$ denotes the symbol of the (ordinary) wave kernel on $\mathcal{O}$, and $\tilde{Z}^T_k$ are the analogously defined sets of fixed points under the hamiltonian flow on $T^*\mathcal{O}$. Because the transverse flow on $M$ projects to the hamiltonian flow on $T^*\mathcal{O}$, $S(\tilde{Z}^T_{k_N})=N(Z^T_{k_N}).$ It is also immediate that $\mu_{\tilde{Z}^T_{k_N}}=\mu'_{Z^T_{k_N}}.$

Then, since $sub(\Delta_{\mathcal{O}})=0$, we conclude that  $\sigma_0(T,(M,\mathcal{F}))=\sigma_0(T,\mathcal{O}),$ proving the result. 

The proofs of the first case is straightforward. The proof of the result over the singular strata follows from the fact that hypothesis guarantees that the fixed point sets under the flow are contained in the respective singular strata, and hence depend on the underlying orbifold, $\mathcal{O}_k.$ We recall from \cite{AR2016b} that a version of equation \eqref{meancurvature} holds for each singular stratum, not just the regular one. In other words, on each foliated stratum $(\Sigma_k,\mathcal{F}_k),$ we have a basic mean curvature vector field
\begin{equation}
H_k=-\nabla \log(Lvol_k(x)),
\end{equation}
where $Lvol_k(x)$ is the volume of the leaf $L\subset \Sigma_k$ containing $x.$ 

Let $\mathcal{O}_k$ denote the orbifold represented by the leaf space $\Sigma_k/\mathcal{F}_k.$ Note that $\mathcal{O}_k$ is not necessarily closed. Nonetheless, we can still think about the wave invariants associated to closed geodesics on $\mathcal{O}_k.$

Suppose $T$ is in the length spectrum $\mathcal{RT}(M,\mathcal{F}),$ and there exist relatively closed curves of this length that are confined to one or more of the singular strata, and suppose that for the component of $Z^T$ of largest quotient dimension, the curves of that length do not leave any of the strata $\Sigma_k$. By the general wave trace formula in Theorem \ref{t:newsirftrace}, the leading term in the expansion has order $-\frac{1}{4}-\frac{e_T}{2},$ and the coefficient $\sigma_0(T,k)$ are given by \eqref{e:newsigmazero}. 

We now consider the restriction of $\sigma(U)$ to each of the strata in $Z^T_{max}.$ Let $k\in \{k_1, \dots, k_{N}\}$. We begin by taking suitable coordinates in a neighborhood $V$ of a point $w$ in $\Sigma_k^*\cap Z^T_k.$ Recall that $\Sigma_k$ has codimension at least 2 in $M$, \cite{Molino}. Let $d_k:=dim(\Sigma_k),$ $q_k:=d_k-k.$ We can take $V$ to be a tubular neighborhood around $\Sigma_k^*\cap Z^T_k$ that contains the point $w$. Let $\Sigma_k$ be locally cut out by the $n-d_k$ functions $z_{d_k+1},\cdots ,z_{n}$. Let $(\boldsymbol{x},\boldsymbol{y})$ be distinguished coordinates on $\Sigma_k$ where $\boldsymbol{x}=(x_1,\dots,x_{q_k})$ and $\boldsymbol{y}=(y_{q_k+1},\dots, y_{d_k})$. Let $(\boldsymbol{x},\boldsymbol{y},\boldsymbol{z};\boldsymbol{\xi},\boldsymbol{\eta},\boldsymbol{\zeta})$ denote the corresponding local coordinates in $T^*M.$ Then $V\cap Z^T_k$ is given locally by $\boldsymbol{z}=0,$ $\boldsymbol{\eta}=0,$ and $\boldsymbol{\zeta}=0$. 

Next, we choose a suitable basis in which to express the metric $g^{ij}$ on the fixed point sets. Choose a basis for the vector subspace $(T_{w}\Sigma_k)^\perp$ so that it is orthonormal with respect to the corresponding metric (which we denote by $g^{ij}_\perp(w)).$ Then extend this to a basis that is constant in the transverse directions by parallel transporting along transverse geodesics (recall that $\Sigma_k$ is transversely totally geodesic). Note that we have arranged it so that $\frac{\partial}{\partial x_i} \bigl(g^{ij}_\perp\bigr)=0$. We can then complete the basis of the tangent space by appending a basis of $T\Sigma_k$ that respects the splitting $T\Sigma_k=N\mathcal{F}_k\oplus T\mathcal{F}_k.$ With respect to this basis, $g^{ij}$ is blockwise diagonal: $g^{ij}=g^{ij}_T\oplus g^{ij}_L\oplus g^{ij}_\perp$ where $g_L^{ij}$ is the leafwise metric, and $g^{ij}_T$ is the transverse metric which does not depend on $\boldsymbol{y}$ since it is bundle-like with respect to the foliation of $\Sigma_k$.

Next, we examine the expression of the basic laplacian with an eye to calculating the restriction of the top order symbol and subprincipal symbol of $\Delta$ on $M$ to the fixed point sets $Z_k^T.$ First consider the formula for $\Delta$ for arbitrary variables $\boldsymbol{x}$:

\begin{equation}
\Delta= -\frac{1}{\sqrt{det(g)}}\sum_{ij}\frac{\partial}{\partial x_i}\bigl(\sqrt{det(g)}g^{ij}\frac{\partial}{\partial x_j}\bigr)
\end{equation}
Let $I_1=\{1,\dots q_k\},$ $I_2=\{q_k+1,\dots d_k\}$, and $I_3=\{d_k+1,\dots n\}.$ With respect to the coordinates previously chosen, we can split the sum over $i$ and $j$ over the three blocks: 

\begin{eqnarray}
\Delta = -\frac{1}{\sqrt{det(g)}}\Bigl[\sum_{i,j\in I_1}&&\!\!\!\!\!\!\!\!\!\!\!\!\frac{\partial}{\partial x_i}\bigl(\sqrt{det(g)}g^{ij}\frac{\partial}{\partial x_j}\bigr)\\ &+&\sum_{i,j\in I_2}\frac{\partial}{\partial y_i}\bigl(\sqrt{det(g)}g^{ij}\frac{\partial}{\partial y_j}\bigr)\\ &+&\sum_{i,j\in I_3}\frac{\partial}{\partial z_i}\bigl(\sqrt{det(g)}g^{ij}\frac{\partial}{\partial z_j})\Bigr]
\end{eqnarray}

Examining this expression with an eye to computing the principal and subprincipal parts of the symbol and restricting to $U\cap Z^T_k$, we see that summing over the indices in $I_2\cup I_3$ corresponding to $T\mathcal{F}\oplus (T\Sigma_k)^\perp$ produces terms that vanish when $\boldsymbol{\eta}$ and $\boldsymbol{\zeta}$ are set equal to zero. Suppressing these terms, and expanding $g$ blockwise with respect to a similar basis as its inverse $(g^{ij})$, then we can express $det(g)$ as the product of the determinants of the blocks $det(g_{ij}^T)det(g_{ij}^L)det(g_{ij}^\perp),$

\begin{eqnarray}
\Delta&=& \frac{-1}{\sqrt{det(g_{ij}^T)det(g_{ij}^L)det(g_{ij}^\perp)}}\sum_{i,j\in I_1}\frac{\partial}{\partial x_i}\bigl(\sqrt{det(g_{ij}^T)det(g_{ij}^L)det(g_{ij}^\perp)}g^{ij}\frac{\partial}{\partial x_j}\bigr)\nonumber\\&+&\cdots.
\end{eqnarray}
This can be re-expressed as follows
\begin{eqnarray}
\Delta&=& \frac{-1}{\sqrt{det(g_{ij}^T)}}\sum_{i,j\in I_1}\frac{\partial}{\partial x_i}\bigl(\sqrt{det(g_{ij}^T})g^{ij}\frac{\partial}{\partial x_j}\bigr)\\&-&
\frac{1}{\sqrt{det(g_{ij}^L)}}\sum_{i,j\in I_1}\frac{\partial}{\partial x_i}\bigl(\sqrt{det(g_{ij}^L})\bigr)\\&-&
\frac{1}{\sqrt{det(g_{ij}^\perp)}}\sum_{i,j\in I_1}\frac{\partial}{\partial x_i}\bigl(\sqrt{det(g_{ij}^\perp}\bigr)\bigr)+\cdots,
\end{eqnarray}
(again suppressing the terms that do not contribute to the restriction to $U\cap Z^T_k$).

From here, we see that the first of the three summations yields the principal symbol of this operator, which is the same as the principal symbol of $\Delta_k$, the induced Laplacian operator on $\mathcal{O}_k=\Sigma_k/\mathcal{F}_k$ restricted to basic functions on $\Sigma_k$. The second sum produces $-H_k,$ the mean curvature form for $(\Sigma_k,\mathcal{F}_k),$ and the last sum is zero when $\boldsymbol{z}=0$ by construction.

Thus, we see that $\sigma(U)$ restricted to $Z^T_k$ equals $sub(\Delta_k-H_k).$ By reasoning identical to that of the first case, it follows that if all the transverse geodesic curves of length $T$ remain in $\Sigma_k,$ then the basic spectrum determines the wave invariant $\sigma_0(T, \mathcal{O}_k)$ for the orbifold $\mathcal{O}_k.$ 
\end{proof}

\begin{remark}
If we have a representation of an orbifold $Q$ whose stratification $(\Sigma_k,\mathcal{F}_k)$ is such that for some $k,$ $\mathcal{O}_k$ is an orbifold singular stratum and all the curves of length $T$ are confined to that one orbifold stratum, then the sum in \eqref{e:expansionnewsirftrace} collapses and we can recover the first wave invariant of $\mathcal{O}_k.$ In this way, some representations may be more suited to capture invariants of the various orbifold strata than others.
 
\end{remark}

\end{subsection}
\end{section}
\begin{section}{Applications to Detecting Orbifold Singularities}\label{thing4}

Here we prove Theorem \ref{orbifoldsingularity} and Corollary \ref{corollaryorbifoldsingularity}. Consider a Riemannian orbifold quotient $(Q, g_Q)$ with representation $(M,\mathcal{F}).$ Observe that the closed geodesics that are orthogonal to the leaves on $(M,\mathcal{F})$ always descend to closed geodesics on $Q$ of the same length, and, thus, $Lspec^\perp(M,\mathcal{F})\subset Lspec(Q)$. On the other hand, if a closed geodesic on $Q$ contains an orbifold singularity, then it is possible that the geodesic does not arise from a smoothly closed geodesic on $M,$ that is orthogonal to the leaves, but rather arises from a geodesic ``lasso" at the singularity $\bar{p}\in \mathcal{O}$ where some element $\gamma$ in the local isotropy group at $\bar{p}$ corrects the angle formed by the lasso.
\begin{subsection}{The Case of a Good Orbifold}

To illustrate this more precisely, consider a very good orbifold $Q=\Gamma\setminus M$, and suppose there exists a closed (but not smoothly closed) geodesic $\bar{\phi}=\pi_Q(\phi)$ where $\phi$ is closed but not smoothly closed geodesic on $M$ that passes through an orbifold singularity at $p\in M$ with corresponding singularity in $Q$ denoted by $\bar{p}$, and local isotropy group $G_p\subset \Gamma$. Suppose $\phi(0)=\phi(\tau)$ and $\phi'(\tau)=d\gamma_*(\phi'(0))$ for some element $\gamma$ of $G_p$ acting non-trivially then $\tau\in Lspec(Q)$. If $\tau\notin Lspec(M)$ (in other words, if there is no other smoothly closed curve of length $\tau$) then $Lspec(M)$ is properly contained in $Lspec(Q)$ and the existence of such a $\tau\in Lspec(Q)\setminus Lspec(M)$ indicates the presence of non-trivial isotropy (regardless of whether or not the associated wave trace singularities cancel). We can generalize this argument to prove Theorem \ref{orbifoldsingularity}.

\end{subsection}

\begin{proof}[The proof of Theorem \ref{orbifoldsingularity}.]
More generally, consider any closed orbifold. In place of the global quotient $(M,\Gamma)$ we consider the frame bundle representation $(\mathcal{F}r(\mathcal{O}), O(q))$ with the usual metric, which we denote by $g_1.$ The main problem here is to identify the smoothly closed geodesic curves on the ambient space $M$ that are orthogonal to the leaves. The length spectrum of $\mathcal{F}r(\mathcal{O})$ contains the lengths of all smoothly closed geodesics, not just the ones that are strictly orthogonal to the leaves. 

We claim that the smoothly closed orthogonal geodesics are precisely the smoothly closed geodesics in $\mathcal{F}r(\mathcal{O})$ that are invariant under leaf-wise rescaling in $\mathcal{R}_\mu(\mathcal{F}r(\mathcal{O}), O(q),H=0)$, and hence the corresponding length spectrum is that given by equation \eqref{invariantperiods}. Let $c(t)$ be any curve on $\mathcal{F}r(\mathcal{O}),$ and let $\dot{c}(t)=(\dot{c}_\perp, \dot{c}_L)$ denote its tangent decomposed with respect to $t$ decomposed with respect to the transverse and leafwise splitting on the tangent space. One can observe by writing down the arc length functional 
\begin{equation}
\ell(c, g_\mu, I)=\int^{t_0}_0 (\|\dot{c}_\perp \|_{g_Q}^2+ \rho^2\|\dot{c}_L \|_{g_L}^2)^{1/2}\,dt
\end{equation}
 for the length of $c$ over any interval $I=[0,t_0]$ with respect to $g_\mu$ that if $\mu_1>\mu_2$ (and thus $\rho_1>\rho_2$ ) then the only curves whose lengths over $I$ remain unchanged are those for which $\dot{c}_L=0$ over $I$. It follows that the lengths of closed geodesics that are invariant under rescaling are precisely those in $Lspec(\mathcal{F}r(Q),g_\mu)$ for all $\mu$.


It just remains to show that $L^\perp(\mathcal{F}r(\mathcal{O}), O(q))$ can be defined as the elements of the length spectrum that are invariant under the rescaling described in the proof of Theorem \ref{infinitefamilies}. Let $\phi$ be a closed geodesic on $\mathcal{F}r(\mathcal{O}).$ Its tangent vector can be decomposed as a direct sum of its orthogonal and leaf-wise parts: $\dot{\phi}(t)=(\dot{\phi}_\perp(t), \dot{\phi}_L(t))$.
\end{proof}

Next we prove Corollary \ref{corollaryorbifoldsingularity}.

\begin{proof}[Proof of Corollary \ref{corollaryorbifoldsingularity}.] 
Consider now iterates of a closed geodesic in the orbifold $Q=\mathcal{O}$ that is closed, but not smoothly closed. Suppose we find a smallest $\tau\in Lspec(Q)\setminus L^\perp(\mathcal{F}r(\mathcal{O}), O(q)).$ Let $\bar{\phi}$ be the corresponding closed geodesic in $Q$. Then $\bar{\phi}(0)=\bar{\phi}(T)$ and by iterating we have $\bar{\phi}(0)=\bar{\phi}(jT)$ for all $j\in \mathbb{Z}.$ Hence, we have $\phi'(jT)=(d\gamma_*)^j(\phi'(0)).$ Since the isotropy is finite, for some $k$, we have $\gamma^k$ equal to the identity, and thus $k$ divides the order of $\gamma$ and the corresponding geodesic must be smoothly closed. If $j$ is such that $\dot{\phi}(jT)\not=\dot{\phi}(0)$ for all $1<j<k$, then $\gamma$ has order $k.$
\end{proof}

\begin{subsection}{Some Remarks}
The hypotheses of Theorem \ref{orbifoldsingularity} and Corollary \ref{corollaryorbifoldsingularity} are admittedly hard to check because it is hard to determine if a given $T$ lies in the set of periods that are invariant under leaf-wise rescaling. In practice, if we are interested in checking whether or not a particular $T$ corresponds to the length of a smoothly closed curve, then we make the following observation. There exists a closed non-orthogonal geodesic of smallest length $T_{min}$ which depends on $\mu$. It is possible to choose $\mu$ so large so that $T_{min}>T+1$. Thus, the interval $[0,T+1]$ will not contain the lengths of any closed non-orthogonal geodesics for this choice of $\mu$, and thus, if $T\in Lspec(\mathcal{F}r(\mathcal{O}, O(q), g_\mu)$ then $T\in Lspec^\perp(\mathcal{F}r(\mathcal{O}), O(q))$.
\end{subsection}

\end{section}



\bibliographystyle{alpha} 

\end{document}